\documentclass[10pt]{article}

\usepackage{lmodern}
\usepackage[T1]{fontenc}
\usepackage{amsmath}
\usepackage{amsthm}
\usepackage{amssymb}
\usepackage{mathabx} % For \bigominus
\usepackage{url}
\usepackage{latexsym}
\usepackage{titlefoot}
\usepackage[small]{titlesec}
\usepackage{units} % for \nicefrac
\usepackage[small,it]{caption}

\usepackage{setspace}

% Bibliography:
\usepackage[square,comma,numbers,sort&compress]{natbib}

% Line numbering:
%\usepackage{lineno}
%\linenumbers

\setlength{\captionmargin}{0.4in}
\setlength{\abovecaptionskip}{0pt}

% Tikz (also necessary):
\usepackage{tikz}
\usetikzlibrary{patterns, intersections}
%\usetikzlibrary{arrows, automata, decorations.pathreplacing, fit, matrix, patterns, positioning}
%\usepackage{tikz-qtree}
%\usepackage{xifthen} % For if-then control (used in a Tikz macro).
\usepackage{xparse} % Needed for \IfBooleanT (used in a Tikz macro).

% Pgfplots for `fillbetween' etc.
%\usepackage{pgfplots}
%\usepgfplotslibrary{fillbetween}
%\pgfplotsset{compat=1.11}

\usepackage{footmisc} % Don't use symbol footnotes

\setlength{\captionmargin}{0.4in}
\setlength{\abovecaptionskip}{0pt}

\usepackage{color}
\definecolor{lightgray}{rgb}{0.8, 0.8, 0.8}
\definecolor{darkgray}{rgb}{0.7, 0.7, 0.7}

\usepackage[bookmarks]{hyperref}
\hypersetup{
        colorlinks=true,
        linkcolor=black,
        anchorcolor=black,
        citecolor=black,
        urlcolor=black,
        pdfpagemode=UseThumbs,
        pdftitle={On The Dimension of Downsets of Integer Partitions and Compositions},
        pdfsubject={Combinatorics},
        pdfauthor={Engen and Vatter},
}

\newcounter{todocounter}

%Theorems etc.
\theoremstyle{plain}
\newtheorem{theorem}{Theorem}[section]
\newtheorem{proposition}[theorem]{Proposition}

\newtheorem{corollary}[theorem]{Corollary}

\theoremstyle{definition}

% Page dimensions:
\setlength{\textwidth}{6in}
\setlength{\textheight}{8in}
\setlength{\topmargin}{0in}
\setlength{\headsep}{0.25in}
\setlength{\headheight}{0.25in}
\setlength{\oddsidemargin}{0.25in}
\setlength{\evensidemargin}{0.25in}
\makeatletter
\newfont{\footsc}{cmcsc10 at 8truept}
\newfont{\footbf}{cmbx10 at 8truept}
\newfont{\footrm}{cmr10 at 10truept}
\pagestyle{plain}

\renewenvironment{abstract}%
                {
                  \begin{list}{}%
                     {\setlength{\rightmargin}{1in}%
                      \setlength{\leftmargin}{1in}}%
                   \item[]\ignorespaces\begin{small}}%
                 {\end{small}\unskip\end{list}}

%Commands:
\newcommand{\st}{\::\:}

\newcommand{\Age}{\operatorname{Age}}
\newcommand{\C}{\mathcal{C}}
\newcommand{\D}{\mathcal{D}}

\newcommand{\mbar}{\widebar{m}}
\newcommand{\mbarbar}{\bar{\mbar}}

% The OEIS links:

%

%
\newcommand\absdot[2]{
	% Make a dot of fixed absolute size.
	\node at #1 {\normalsize $\bullet$};
	\node at #1 [below] {$#2$};
}

\newcommand{\hassestp}[2]{
  \foreach \c [count=\i] in {#1} {
    \node (\i0) at (\i,0) {$\c$};
  };
  \foreach \c [count=\i] in {#2} {
    \node (\i1) at (\i,1) {$\c$};
    \foreach \d [count=\j] in {#1} {
      \ifthenelse{\i < \j \OR \i > \j}{\draw (\j0)--(\i1);}{}
    };
  };
}

\datefoot{\today}
\amssubj{06A07, 68R15}
%\keywords{???}

\newpagestyle{main}[\small]{
        \headrule
        \sethead[\usepage][][]
        {\sc On The Dimension of Downsets of Integer Partitions and Compositions}{}{\usepage}}

\setlength{\parindent}{0pt}
\setlength{\parskip}{1.5ex}

\title{\sc On The Dimension of Downsets of Integer Partitions and Compositions}
\author{%
Michael Engen and Vincent Vatter%
\footnote{Vatter's research was sponsored by the National Security Agency under Grant Number H98230-16-1-0324.  The United States Government is authorized to reproduce and distribute reprints not-withstanding any copyright notation herein.}\\[-0.25ex]
\small Department of Mathematics\\[-0.5ex]
\small University of Florida\\[-0.5ex]
\small Gainesville, Florida USA\\[-1.5ex]
}

\titleformat{\section}
        {\large\sc}
        {\thesection.}{1em}{}

\date{}

%%%%%%%%%%%%%%%%%%%%%%%%%%%%%%%%%%%%%%%%%%%%%%%%%%%%%%%%%%%%%%%%%%%
% Vince's Tikz macros                                             %
% Version of 19 June 2016                                         %
%%%%%%%%%%%%%%%%%%%%%%%%%%%%%%%%%%%%%%%%%%%%%%%%%%%%%%%%%%%%%%%%%%%

%
%
%
%

% Points of absolute size (to be used when drawing matchings and permutations).
% Call \absdot[label]{(2,3)} for label, otherwise omit optional argument.

\newcommand\mybullet{\raisebox{-5pt}{\normalsize \ensuremath{\bullet}}}
\newcommand\mycirc{\raisebox{-5pt}{\normalsize \ensuremath{\circ}}}

\makeatletter
\def\absdot{\@ifnextchar[{\@absdotlabel}{\@absdotnolabel}}
	\def\@absdotlabel[#1]#2{%
		\node at #2 {\normalsize \mybullet};
		\node at #2 [below=2pt] {\ensuremath{#1}};
	}
	\def\@absdotnolabel#1{%
		\node at #1 {\normalsize \mybullet};
	}
\def\absdothollow{\@ifnextchar[{\@absdothollowlabel}{\@absdothollownolabel}}
	% Make a dot of fixed absolute size.
	% Note that \absdothollow first overwrites what it is going on top of.
	\def\@absdothollowlabel[#1]#2{%
		\node at #2 {\normalsize \textcolor{white}{\mybullet}};
		\node at #2 {\normalsize \mycirc};
		\node at #2 [below=2pt] {\ensuremath{#1}};
	}
	\def\@absdothollownolabel#1{%
		\node at #1 {\normalsize \textcolor{white}{\mybullet}};
		\node at #1 {\normalsize \mycirc};
	}
\makeatother

%
%
%
%

% Plotting permutations. Recommended scale: 0.25 (and possibly up to 0.5).

\newcommand{\plotperm}[1]{
	\foreach \j [count=\i] in {#1} {
		\absdot{(\i,\j)};
	};
}

\newcommand{\plotpermgraph}[1]{
	\foreach \j [count=\i] in {#1} {
		\foreach \b [count=\a] in {#1} {
			% Draw edge from (a,b) to (i,j) if they form an inversion.
			\ifthenelse{\a<\i \AND \b>\j}{\draw (\a,\b)--(\i,\j);}{}
		};
	};
	\plotperm{#1};
}

\newcommand{\plotpermdyckpath}[1]{
	% Pass a list of coordinates. Everything is shifted, so the lower-left of the grid is the origin.
	\draw[ultra thick, line cap=round] (0.5,0.5)
	\foreach \step in {#1} {
		\ifnum\step=1
			-- ++(0,1)
		\else
			-- ++(1,0)
		\fi
	};
}

%
%
%
%

% Drawing Dyck paths. Pass the command a list of 1s and -1s.
% Note that this path will be a factor of sqrt(2) larger than \plotpermdyckpath
% Adapted from http://tex.stackexchange.com/questions/63540/how-to-draw-a-catalan-number-diagram-on-tikz

\newcommand{\plotdyckpath}[1]{
	\draw[ultra thick, line cap=round] (0.5,0)
	\foreach \step in {#1} {
		\ifnum\step=1
			-- ++(1,1)
		\else
			-- ++(1,-1)
		\fi
	};
}

%
%
%
%

% Drawing (large) matchings. Recommended scale is around 0.5.

\newcommand{\arcskinnyplain}[2]{
	\draw[thick] (#1,0) arc (180:0:{(#2-#1)/2});
}

%
%
%
%

%
%
%
%

% Drawing (small) matchings (all have width = 0.1*(verts-1) and height=0.15

\newcommand{\matchsmall}[1]{
	\begin{tikzpicture}[scale=.1, anchor=base]
		% We need to read through so that we can rescale y so that the height is precisely 0.15
		\def\h{0};
		\def\maxh{0};
		\foreach \i/\j in {#1} {
			\pgfmathparse{\j-\i};
			\let\h\pgfmathresult;
			\pgfmathifthenelse{\h>\maxh}{\h}{\maxh};
			\global\let\maxh\pgfmathresult;
		};
		\pgftransformyscale{{4.5/\maxh}};
		\foreach \i/\j in {#1} {
			\arcskinnyplain{\i}{\j};
		};
	\end{tikzpicture}
}

\newcommand{\matchpermsmall}[1]{
	\begin{tikzpicture}[scale=.1, anchor=base]
		\foreach \j [count=\n] in {#1} {};
		% Now \n stores the length of the permutation.
		% We need to read through again so that we can rescale y so that the height is precisely 0.15
		\def\h{0};
		\def\maxh{0};
		\foreach \j [count=\i] in {#1} {
			\pgfmathparse{2*\n+1-\j-\i};
			\let\h\pgfmathresult;
			\pgfmathifthenelse{\h>\maxh}{\h}{\maxh};
			\global\let\maxh\pgfmathresult;
		};
		\pgftransformyscale{{4.5/\maxh}};
		\foreach \j [count=\i] in {#1} {
			\arcskinnyplain{\i}{{2*\n+1-\j}};
		};
	\end{tikzpicture}
}

%
%
%
%

% Plotting partitions.

% Example:
%		\begin{center}
%			\begin{tikzpicture}[scale=0.25]
%				\plotpartitiongray{5,3,2,1,6,1,1};
%				\plotpartition{4,1,1,1,1};
%			\end{tikzpicture}
%		\end{center}

%
%
%
%

% Plotting compositions (skyline diagrams).
\newcommand{\plotskyline}[1]{
	\foreach \j [count=\i] in {#1} {
		\draw [thick, line cap=round] (\i-1,0)--(\i-1,\j)--(\i,\j)--(\i,0)--cycle;
		\foreach \k in {1,2,...,\j}{
			\draw [thick, line cap=round] (\i-1,\k-1)--(\i,\k-1);
		};
	};
}

\newcommand{\plotskylineshaded}[1]{
	\foreach \j [count=\i] in {#1} {
		\draw [lightgray, fill=lightgray, thick, line cap=round] (\i-1,0)--(\i-1,\j)--(\i,\j)--(\i,0)--cycle;
		\foreach \k in {1,2,...,\j}{
			\draw [lightgray, fill=lightgray, thick, line cap=round] (\i-1,\k-1)--(\i,\k-1);
		};
	};
}

% Example:
%		\begin{center}
%			\begin{tikzpicture}[scale=0.25]
%				\plotskylinegray{5,3,2,4,6,1,2};
%				\plotskyline{4,1,1,1,5,1,2};
%			\end{tikzpicture}
%		\end{center}

%
%
%
%

\begin{document}
\maketitle

\pagestyle{main}

\begin{abstract}
We characterize the downsets of integer partitions (ordered by containment of Ferrers diagrams) and compositions (ordered by the generalized subword order) which have finite dimension in the sense of Dushnik and Miller. In the case of partitions, while the set of all partitions has infinite dimension, we show that every proper downset of partitions has finite dimension. For compositions we identify four minimal downsets of infinite dimension and establish that every downset which does not contain one of these four has finite dimension.
\end{abstract}

\section{Introduction}
\label{sec-intro}

The notion of the \emph{dimension} of a poset $P = (X, \le)$ was introduced by Dushnik and Miller~\cite{dushnik:partially-order:}, who defined it as the least $d$ so that $P$ embeds into a product of $d$ linear orders. In particular, the dimension of a countable poset $P$ is the least $d$ so that $P$ embeds into $\mathbb{R}^d$, the definition given by Ore~\cite{ore:theory-of-graph:}. Here we consider the dimension of downsets of integer partitions and compositions.

The partial order on partitions we consider is simply the one in Young's lattice, namely containment of Ferrers diagrams, and we establish the result below.

\begin{theorem}
\label{thm-part-dim}
A downset of integer partitions is finite dimensional if and only if it does not contain every partition.
\end{theorem}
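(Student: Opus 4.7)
The plan is to handle the two directions separately. For the ``infinite dimensional'' direction, I would exhibit a copy of the Dushnik--Miller standard example $S_n$ inside the poset of all partitions, for every $n$. Take $a_i = ((n+1-i)^i)$, the $i \times (n+1-i)$ rectangle, for $i = 1, \ldots, n$; these form an antichain because two rectangles are comparable only if one has both weakly more rows and weakly more columns than the other, forcing equality here. For each $j$, let $b_j$ be the join (union of Ferrers diagrams) of $\{a_i : i \neq j\}$, so automatically $a_i \subseteq b_j$ whenever $i \neq j$. To see that $a_j \not\subseteq b_j$ I would compute the $j$-th row of $b_j$ from $(b_j)_k = \max_{i \neq j,\, i \ge k}(n+1-i)$: rows $i < j$ contribute only at positions $\le i < j$, while the smallest $i > j$ contributes length $n-j$ starting at row $j+1$. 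Hence $(b_j)_j = n - j < n+1-j = (a_j)_j$, producing the standard example on $2n$ points and forcing infinite dimension.

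For the ``finite dimensional'' direction, let $\D$ be a proper downset and pick any partition $\pi \notin \D$. Because $\D$ is closed downward, $\D \subseteq \Av(\pi) := \{\lambda : \pi \not\subseteq \lambda\}$. Letting $(c^r)$ denote the bounding rectangle of $\pi$, we have $\pi \subseteq (c^r)$ and therefore $\Av(\pi) \subseteq \Av((c^r))$. Since dimension is monotone on induced subposets, it suffices to bound $\dim \Av((c^r))$.

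The partitions in $\Av((c^r))$ are precisely those $\lambda$ with $\lambda_r \le c - 1$. I would embed $\Av((c^r))$ into a product of two posets via the splitting map $\lambda \mapsto ((\lambda_1, \ldots, \lambda_{r-1}),\, (\lambda_r, \lambda_{r+1}, \ldots))$. The first coordinate is a partition with at most $r-1$ parts, living inside $\mathbb{N}^{r-1}$ under the coordinatewise order and therefore of dimension at most $r-1$. The second is a partition with every part at most $c-1$; transposing converts it into a partition with at most $c-1$ parts, giving dimension at most $c-1$ by the same argument. The splitting map is readily verified to be an order embedding (both directions follow from $\lambda \subseteq \mu$ iff $\lambda_i \le \mu_i$ for every $i$), and from the inequality $\dim(P \times Q) \le \dim P + \dim Q$ one obtains $\dim \Av((c^r)) \le r + c - 2 < \infty$.

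I expect the only delicate step to be the join computation in the standard-example construction: one needs to confirm that removing exactly $a_j$ from the family drops the $j$-th row of the join from $n+1-j$ down to $n-j$, since this gap is the entire reason $a_j \not\subseteq b_j$. Everything else reduces to the standard facts that dimension is subadditive on products and monotone on induced subposets.
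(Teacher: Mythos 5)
Your proof is correct, and while the infinite-dimensional direction is essentially the paper's own construction (rectangles $a_i$ together with the joins $b_j$ of the others, forming crowns/standard examples of every size; your row-$j$ computation $(b_j)_j = n-j < n+1-j = (a_j)_j$ is exactly the needed verification, and the critical pairs $(a_j,b_j)$ already force dimension $\ge n$ even without checking the $b_j$ form an antichain), the finite-dimensional direction takes a genuinely shorter route. The paper first invokes well quasi-ordering (Higman) and Fra\"{\i}ss\'e's theorem to write a proper downset as a finite union of ages, bounds each age by noting it lies in some $\Age(\omega^k\lambda\ell^\omega) \cong \Age(\omega^k\lambda)\times\Age(\ell^\omega) \hookrightarrow \mathbb{N}^{k+|\lambda|}\times\mathbb{N}^{\ell}$, and then sums dimensions over the union via Proposition~\ref{prop-union-dimension}. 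You instead observe that a proper downset avoiding $\pi$ sits inside $\Av((c^r))$, which is precisely the single age $\Age(\omega^{r-1}(c-1)^\omega)$, and bound its dimension by the splitting $\lambda \mapsto ((\lambda_1,\dots,\lambda_{r-1}),(\lambda_r,\lambda_{r+1},\dots)) \in \mathbb{N}^{r-1}\times(\text{partitions with parts} \le c-1)$, conjugating the second factor into $\mathbb{N}^{c-1}$; monotonicity of dimension then finishes the argument with the explicit bound $r+c-2$. So the core product-embedding idea is the same (your argument is the paper's bound in the special case $\lambda=\varepsilon$), but you bypass the wqo/Fra\"{\i}ss\'e decomposition and the union lemma entirely, which the paper carries along mainly because that machinery is needed for the composition theorem; what your approach buys is a self-contained, elementary proof with a concrete dimension bound coming from any single avoided partition.
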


We go on to study the dimension of downsets of compositions under the \emph{generalized subword order}. In this order we view compositions as words over the positive integers $\mathbb{P}$, and we denote the set of these words by $\mathbb{P}^\ast$. Given two compositions $u=u(1)\cdots u(k)$ and $w=w(1)\cdots w(n)$, we say that $u$ is \emph{contained} in $w$ and write $u\le w$ if there are indices $1\le i_1<\cdots<i_k\le n$ such that $u(j)\le w(i_j)$ for all $j$.

This order can be illustrated graphically by way of skyline diagrams. The \emph{skyline diagram}  of the composition $w=w(1)\cdots w(n)$ consists of $n$ columns of cells, with the $i$th column having $w(i)$ cells. For compositions $u$ and $w$, we have $u\le w$ if the skyline diagram of $u$ can be embedded into that of $w$. For example, the diagrams below show that $3413\le 141421143$.

\begin{center}
\begin{tabular}{ccc}
  \begin{tikzpicture}[scale=.35, baseline=(current bounding box.center)]
    \plotskyline{3,4,1,3};
  \end{tikzpicture}
  &
  \begin{tikzpicture}[baseline=(current bounding box.center)]
    \node at (0,0) {$\le$};
  \end{tikzpicture}
  &
  \begin{tikzpicture}[scale=.35, baseline=(current bounding box.center)]
  	\plotskylineshaded{0,3,0,4,1,0,0,3,0};
    \plotskyline{1,4,1,4,2,1,1,4,3};
  \end{tikzpicture}
\end{tabular}
\end{center}

The generalized subword order on compositions has received some attention since it was first considered by Bergeron, Bousquet-M\'elou, and Dulucq~\cite{bergeron:standard-paths-:}, who studied saturated chains in this poset. Snellman~\cite{snellman:standard-paths-:} extended their work. Later, Sagan and Vatter~\cite{sagan:the-mobius-func:} determined the M\"obius function of this poset, and Bj\"orner and Sagan~\cite{bjorner:rationality-of-:comp} showed that this M\"obius function has a rational generating function. Finally, Vatter~\cite{vatter:reconstructing-:} considered the analogue of the Reconstruction Conjecture in this poset.

To state the analogue of Theorem~\ref{thm-part-dim} for compositions, we need to introduce a bit more notation and extend our viewpoint to include compositions. A possibly infinite composition is represented by a word over the alphabet $\mathbb{P}\cup\{n^\omega\st n\in\mathbb{P}\}\cup\{\omega,\omega^\omega\}$. In such a word, $n^\omega$ stands for an infinite number of parts all equal to $n$, $\omega$ stands for an infinite part, and $\omega^\omega$ stands for an infinite number of infinite parts. Given a word $u$ over the alphabet $\mathbb{P}\cup\{n^\omega\st n\in\mathbb{P}\}\cup\{\omega,\omega^\omega\}$, the \emph{age} of $u$, denoted $\Age(u)$ is the set of all compositions which embed into it (this term dates to Fra{\"{\i}}ss\'{e}~\cite{fraisse:sur-lextension-:}). For example, $\Age(\omega\omega\omega)$ is the set of compositions with at most three parts, $\Age(2^\omega)$ is the set of all compositions with all parts at most two, and $\Age(1^\omega \omega 2131^\omega)$ consists of all compositions which embed into the skyline diagram below.

\begin{center}
  \begin{tikzpicture}[scale=.35]
    \plotskyline{1,1,1,4,2,1,3,1,1,1};
		
    \filldraw[black] (-0.75,0.5) circle [radius=0.05cm];
    \filldraw[black] (-0.5, 0.5) circle [radius=0.05cm];
    \filldraw[black] (-0.25,0.5) circle [radius=0.05cm];
    
    \filldraw[black] (3.5,4.25)  circle [radius=0.05cm];
    \filldraw[black] (3.5, 4.5)  circle [radius=0.05cm];
    \filldraw[black] (3.5,4.75)  circle [radius=0.05cm];
    
    \filldraw[black] (10.25,0.5) circle [radius=0.05cm];
    \filldraw[black] (10.5 ,0.5) circle [radius=0.05cm];
    \filldraw[black] (10.75,0.5) circle [radius=0.05cm];
  \end{tikzpicture}
\end{center}

We can now state our result for compositions.

\begin{theorem}
\label{thm-comp-dim}
A downset of compositions in the generalized subword order is finite dimensional if and only if it does not contain $\Age(\omega\omega\omega)$, $\Age(1^\omega 2 1^\omega 2 1^\omega)$, $\Age(\omega 1^\omega \omega 1^\omega)$, or $\Age(1^\omega\omega 1^\omega\omega)$.
\end{theorem}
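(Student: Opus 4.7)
The plan is to handle the two directions of the biconditional separately. For \emph{necessity} --- showing that each of the four listed ages really has infinite dimension --- I would construct, for every $d\ge 1$, a copy of the standard example $S_d$ inside each age (where $S_d$ is the $2d$-element poset $\{a_1,\dots,a_d,b_1,\dots,b_d\}$ with $a_i<b_j$ iff $i\ne j$, whose dimension is $d$). Each of the four ages offers two positional degrees of freedom, just enough to encode $S_d$. For example, in $\Age(\omega\omega\omega)$ I would take the length-two compositions $a_i=(i+1,\,d-i+2)$ together with the length-three compositions $b_j=(j,\,d+1,\,d-j+1)$; checking the three possible choices of strictly increasing index pair $1\le k_1<k_2\le 3$ shows $a_i\le b_j$ iff $j\ne i$. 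An analogous construction works in $\Age(1^\omega 2\,1^\omega 2\,1^\omega)$ using $a_i=1^{i+1}2\,1^{d-i+1}$ and $b_j=1^{j}2\,1^{d}2\,1^{d-j}$, where the disjunction ``embed the single $2$ into either $2$ of $b_j$'' realises the $j\ne i$ relation. The remaining two ages admit essentially the same setup, replacing $2$ by a suitably large letter and respecting the positional constraints imposed by the outer $\omega$s.

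For \emph{sufficiency} --- showing that a downset $\D$ avoiding all four ages is finite-dimensional --- I would first invoke Higman's lemma, which makes $\mathbb{P}^\ast$ well-quasi-ordered and allows one to represent $\D$ as a finite union of ages of generalized compositions over the extended alphabet $\mathbb{P}\cup\{n^\omega\st n\in\mathbb{P}\}\cup\{\omega,\omega^\omega\}$. The hypothesis that $\D$ contains none of the four forbidden ages translates directly into structural constraints on these generic generalized compositions: no such word can have three $\omega$s, nor two $2$s buried inside three $1^\omega$ blocks, nor two $\omega$s arranged in the $\omega 1^\omega\omega 1^\omega$ or $1^\omega\omega 1^\omega\omega$ patterns. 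A case analysis should then show that every permitted generic word falls into one of finitely many combinatorial shape types, and for each shape the corresponding age admits an explicit embedding into $\mathbb{R}^k$ for some universal $k$, with coordinates given by the sizes of the parts and the lengths of the $1^\omega$ or $n^\omega$ blocks. A finite union of finite-dimensional posets is finite-dimensional, completing the argument.

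\textbf{Main obstacle.} The heart of the proof is the sufficiency direction, and the main challenge there is to establish the structural dichotomy tightly: the four forbidden ages really are the \emph{only} obstructions. One has to rule out the possibility that some subtler configuration --- for instance, one involving $n^\omega$ blocks for $n\ge 2$, or an $\omega^\omega$ letter, or two $\omega$s in yet another relative position --- produces a further infinite-dimensional age that is not already forced to contain one of the four. This requires a careful combinatorial case-analysis of all permissible generic generalized compositions, essentially a dual classification to the necessity constructions. The dimension bounds for individual shapes, by contrast, should follow routinely from explicit coordinate embeddings once the shape classification is in hand.
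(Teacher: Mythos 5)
Your necessity half is correct and essentially the paper's: planting crowns (standard examples) of every dimension in each of the four ages, and your explicit $a_i,b_j$ in $\Age(\omega\omega\omega)$ and $\Age(1^\omega 2 1^\omega 2 1^\omega)$ do check out. Your sufficiency skeleton --- Higman's lemma, reduction of an arbitrary downset to finitely many ages, exclusion of $\omega^\omega$ and $n^\omega$ ($n\ge 2$) letters, and classification of the maximal permitted words as (essentially) $a\omega b 1^\omega c 1^\omega d \omega e$ and $a 1^\omega b \omega c \omega d 1^\omega e$ --- also matches the paper, modulo the standard bookkeeping that the reduction to ages should go through the descending chain condition for wqo downsets plus Fra\"{\i}ss\'e's characterization, and that unions are handled by the fact that a union of two finite-dimensional downsets is finite dimensional.

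The genuine gap is the step you declare routine: that each permitted age ``admits an explicit embedding into $\mathbb{R}^k$ with coordinates given by the sizes of the parts and the lengths of the $1^\omega$ or $n^\omega$ blocks.'' This is exactly where the paper's real work lies, and the naive coordinate map is not order-preserving, because a composition has no canonical position relative to the generic word: already in $\Age(1^\omega 2 1^\omega)$ we have $1^5 \le 1^2 2 1^2$, while any assignment of ``prefix length, part, suffix length'' coordinates to $1^5$ and $1^2 2 1^2$ fails to be componentwise monotone. More seriously, for $\Age(1^\omega c 1^\omega)$ the incomparability $1^i a 1^j \not\le 1^k b 1^\ell$ (with $a<b\le c$) is governed by \emph{all} compact embeddings of $a$ into $b$, and breaking these incomparabilities with finitely many linear extensions is the technical heart of the paper (Proposition~\ref{prop-1oa1o}): one partitions the age into intervals $[a,1^\omega a 1^\omega)\cong\mathbb{N}^2$, isolates for each $(k,\ell)$ a finite troublesome set $T_{k,\ell}$, and then merges the infinitely many local refinements $R_{k,\ell}$ into finitely many by grouping $(k,\ell)$ into residue classes modulo fixed parameters. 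Attaching the flanking $\omega$'s then needs further non-obvious substitution arguments (Propositions~\ref{prop-ku}, \ref{prop-1oa1obo}, \ref{prop-oa1ob1oco}, \ref{prop-1oaob1o}, \ref{prop-1oaoboc1o}, where a maximal letter $\mbar$ is blown up into the interval it represents). Correspondingly, your ``main obstacle'' is misplaced: ruling out $\omega^\omega$, $n^\omega$ ($n\ge 2$), and extra $\omega$-patterns is the easy part (each such letter immediately forces one of the four forbidden ages), whereas the per-shape dimension bounds you defer as routine are precisely what a complete proof must supply.
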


We also use the concept of ages in the partition setting, where the age of a word $u$ over the alphabet $\mathbb{P}\cup\{n^\omega\st n\in\mathbb{P}\}\cup\{\omega,\omega^\omega\}$ is the set of all (finite) integer partitions which embed into $u$. While notationally identical, it will always be clear from the context whether an age consists of partitions or compositions.

Dimension is a monotone property in that the dimension of a poset is at least that of any of its subposets. Thus to show that a poset is infinite dimensional we show that it contains subposets of arbitrarily large dimension. In particular, we recall that the \emph{crown} on the $2n$ elements $\{a_1,\dots,a_n,b_1,\dots,b_n\}$ is the poset in which the only comparisons are of the form $a_i < b_j$ for $i\ne j$, as depicted in the Hasse diagram below.
\begin{center}
  \begin{footnotesize}
    \begin{tikzpicture}[xscale=2]
    \hassestp{a_1, a_2, \cdots, a_n}{b_1, b_2, \cdots, b_n}
  \end{tikzpicture}
  \end{footnotesize}
\end{center}
It is easily seen that the crown on $2n$ elements has dimension $n$, so we refer to it as the crown of dimension $n$.

To establish that the poset of all integer partitions is infinite dimensional, it suffices to find arbitrarily large crowns of partitions. One such family of crowns is defined by taking
\[
  a_i = (n-i)^i
  \quad\text{and}\quad
  b_i = \bigvee_{j\neq i} a_j,
\]
i.e., taking $a_i$ to be the partition consisting of $i$ parts equal to $n-i$ and $b_i$ to be the join (in Young's lattice) of all $a_j$ for $j\neq i$.

Similarly, one direction of Theorem~\ref{thm-comp-dim} can be established by finding arbitrarily large crowns in the four stated ages. For example, we see that $\Age(\omega\omega\omega)$ contains the crown of dimension $n-3$ shown below for all $n\ge 5$.
  \begin{center}
  \begin{footnotesize}
  \begin{tikzpicture}[xscale=2]
    \hassestp{2 (n-2), 3 (n-3), 4 (n-4), \cdots, (n-2) 2}{1 n (n-3), 2 n (n-4), 3 n (n-5), \cdots, (n-3) n 1}
  \end{tikzpicture}
  \end{footnotesize}
  \end{center}
A slight modification of this crown shows that $\Age(1^\omega 2 1^\omega 2 1^\omega)$ is infinite dimension, as it contains the crown of dimension $n-3$ shown below for all $n\ge 5$.
  \begin{center}
  \begin{footnotesize}
  \begin{tikzpicture}[xscale=2]
    \hassestp{1^2 2 1^{n-2}, 1^3 2 1^{n-3}, 1^4 2 1^{n-4}, \cdots, 1^{n-2} 2 1^2}{1^1 2 1^n 2 1^{n-3}, 1^2 2 1^n 2 1^{n-4}, 1^3 2 1^n 2 1^{n-5}, \cdots, 1^{n-3} 2 1^n 2 1^1}
  \end{tikzpicture}
  \end{footnotesize}
  \end{center}
The last two ages stated in Theorem~\ref{thm-comp-dim} are isomorphic, so it suffices to show that $\Age(\omega 1^\omega \omega 1^\omega)$ is infinite dimensional. This age contains the crown of dimension $n-1$ shown below for all $n\ge 3$.
  \begin{center}
  \begin{footnotesize}
  \begin{tikzpicture}[xscale=2]
    \hassestp{2 1^n, 3 1^{n-1}, 4 1^{n-2}, \cdots, n 1^2}{1 1^0 n 1^{n-1}, 2 1^1 n 1^{n-2}, 3 1^2 n 1^{n-3}, \cdots, (n-1) 1^{n-2} n 1^1}
  \end{tikzpicture}
  \end{footnotesize}
  \end{center}

Thus it suffices to prove that downsets of compositions not containing any of these four ages are finite dimensional. Note that $\Age(2^\omega)$ is infinite dimensional---this follows from the fact that it contains $\Age(1^\omega 2 1^\omega 2 1^\omega)$, or more easily by observing that it contains the crown of dimension $n$ defined by $a_i=1^{i-1}21^{n-i}$ and $b_i=2^{i-1}12^{n-i}$. Consequently, the age of any (infinite) composition which includes any symbol of the form $\omega^\omega$ or $n^\omega$ for $n\ge 2$ is necessarily infinite dimensional. Therefore when characterizing the finite dimensional ages of infinite compositions we may restrict our attention to ages of words over the alphabet $\mathbb{P}\cup\{1^\omega,\omega\}$.

\section{Tools}
\label{sec-tools}

In this section we introduce the tools we use to establish the other directions of Theorems~\ref{thm-part-dim} and \ref{thm-comp-dim}. A poset $P$ is \emph{well quasi-ordered} if it contains neither infinite antichains nor infinite strictly decreasing chains, i.e., $x_0 > x_1 > \cdots$. We begin by recalling the following well-known result.

\begin{theorem}{(Higman's Lemma~\cite{higman:ordering-by-div:})}
\label{thm-higman}
If $(P,\le)$ is well quasi-ordered then $P^\ast$, the poset of words over $P$ ordered by the generalized subword order, is also well quasi-ordered.
\end{theorem}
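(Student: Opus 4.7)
The plan is to proceed via the minimal bad sequence argument of Nash-Williams. Recall that well quasi-orderedness of $(P,\le)$ is equivalent to the statement that every infinite sequence from $P$ contains a \emph{good pair}---indices $i<j$ with $x_i\le x_j$; call a sequence without any good pair \emph{bad}. I would assume for contradiction that $P^\ast$ admits a bad sequence and then extract from it a particularly well-behaved one.

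Specifically, I would construct a minimal bad sequence $w_1,w_2,\ldots$ in $P^\ast$ by recursion: having chosen $w_1,\ldots,w_{i-1}$ which extends to some bad sequence, let $w_i$ be a word of minimum possible length subject to $w_1,\ldots,w_i$ still extending to a bad sequence. Each $w_i$ must be nonempty, since the empty word embeds into every word and would immediately yield a good pair. Writing $w_i=a_i v_i$ with $a_i\in P$ the leading letter and $v_i\in P^\ast$ the remaining suffix, I would apply the well quasi-orderedness of $P$ to $(a_i)$ to obtain an infinite nondecreasing subsequence $a_{i_1}\le a_{i_2}\le\cdots$.

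Now consider the hybrid sequence
\[
  w_1,\,w_2,\,\ldots,\,w_{i_1-1},\,v_{i_1},\,v_{i_2},\,v_{i_3},\,\ldots,
\]
in which $v_{i_1}$ is strictly shorter than $w_{i_1}$. By the minimality of $w_{i_1}$ this hybrid sequence cannot be bad, and I would then case-split on where a good pair can live. A good pair lying inside the prefix $w_1,\ldots,w_{i_1-1}$ is impossible since those terms are taken from the original bad sequence. A good pair $v_{i_j}\le v_{i_k}$ combines with $a_{i_j}\le a_{i_k}$ to give $w_{i_j}\le w_{i_k}$, and a mixed pair $w_m\le v_{i_k}$ with $m<i_1\le i_k$ yields $w_m\le w_{i_k}$ directly. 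Each case contradicts the badness of the original sequence.

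The main obstacle is bookkeeping rather than any genuine mathematical difficulty: one must set up the minimal bad sequence carefully, using dependent choice together with the well-foundedness of word lengths, and then verify that the trichotomy above is exhaustive. Once this framework is in place the deduction is routine, and the absence of an infinite strictly decreasing chain in $P^\ast$ is absorbed automatically into the good-pair reformulation of well quasi-orderedness.
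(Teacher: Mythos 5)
Your argument is correct: it is the standard Nash-Williams minimal bad sequence proof of Higman's Lemma, and the case analysis on where a good pair of the hybrid sequence can occur is exhaustive and handled properly (a pair inside the prefix contradicts badness of the original sequence; $w_m\le v_{i_k}\le w_{i_k}$ with $m<i_1\le i_k$ does likewise; and $v_{i_j}\le v_{i_k}$ together with $a_{i_j}\le a_{i_k}$ gives $w_{i_j}\le w_{i_k}$). Note, however, that the paper itself offers no proof of this statement—it is quoted as a known result with a citation to Higman's original article—so there is nothing internal to compare against. Relative to Higman's original, more algebraically framed treatment, the route you chose is the now-standard streamlined argument. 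Two small points you correctly flag but should spell out in a full write-up: the construction of the minimal bad sequence needs dependent choice plus well-foundedness of length, and the extraction of an infinite nondecreasing subsequence $a_{i_1}\le a_{i_2}\le\cdots$ from the leading letters is itself a (standard) lemma about well quasi-orders, usually proved via Ramsey's theorem or a direct argument that only finitely many terms can fail to be below some later term. With those details filled in, the proof is complete.
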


As the poset of partitions is a subposet of $\mathbb{P}^\ast$ and the poset of compositions is precisely the poset $\mathbb{P}^\ast$, Higman's Lemma implies that both posets are well quasi-ordered. This allows us to appeal to the following result.

\begin{proposition}
\label{prop-wqo-subclasses-dcc}
Downsets of well quasi-orders satisfy the \emph{descending chain condition}, i.e., there does not exist a sequence of downsets satisfying $\C^0\supsetneq \C^1\supsetneq \C^2\supsetneq \cdots$.
\end{proposition}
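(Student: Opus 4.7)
The plan is to argue by contradiction, producing an infinite bad sequence from a hypothetical strictly descending chain of downsets and then invoking the well-quasi-order property to derive an impossibility.

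Suppose toward contradiction that $\C^0 \supsetneq \C^1 \supsetneq \C^2 \supsetneq \cdots$ is a strictly descending chain of downsets in a wqo $P$. For each $i \ge 1$, the strict containment $\C^{i-1} \supsetneq \C^i$ lets me choose an element $x_i \in \C^{i-1} \setminus \C^i$. This yields an infinite sequence $x_1, x_2, \ldots$ of elements of $P$.

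Next I would recall the standard consequence of being wqo: an infinite sequence in a wqo contains neither an infinite antichain nor an infinite strictly decreasing subsequence, so by Ramsey-type reasoning (or the equivalent formulation of wqo) there must exist indices $i < j$ with $x_i \le x_j$. I would then use the downset property to get the contradiction: since $j-1 \ge i$, the chain of inclusions gives $x_j \in \C^{j-1} \subseteq \C^i$, and because $\C^i$ is a downset and $x_i \le x_j$, it follows that $x_i \in \C^i$, contradicting the choice $x_i \notin \C^i$.

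The main step to verify carefully is the passage from ``no infinite antichain and no infinite strictly decreasing chain'' to the existence of some pair $i<j$ with $x_i \le x_j$ in an arbitrary infinite sequence; this is a well-known equivalence for wqos but deserves a short remark. Beyond that, the argument is essentially bookkeeping with the chain of inclusions $\C^i \supseteq \C^{i+1} \supseteq \cdots$ and the definition of a downset, so I expect no further obstacle.
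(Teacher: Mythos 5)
Your proposal is correct and follows essentially the same route as the paper: both pick $x_i \in \C^{i-1}\setminus\C^i$ from a hypothetical strictly descending chain and derive a contradiction from a pair $x_i \le x_j$ with $i < j$ via the downset property of $\C^i$. The only (minor) difference is how that pair is produced---the paper observes that the minimal elements of $\{x_1, x_2, \ldots\}$ form an antichain, hence are finite in number, so some later term lies above an earlier one, while you invoke the standard ``good pair'' characterization of wqo (the Ramsey-type equivalence you flag), which indeed deserves the short justification you mention but is an immediate consequence of the paper's definition.
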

\begin{proof}
Suppose to the contrary that the well quasi-ordered downset $\C$ were to contain an infinite strictly decreasing sequence of subdownsets $\C=\C^0\supsetneq \C^1\supsetneq \C^2\supsetneq\cdots$. For each $i\ge 1$, choose $x_i\in\C^{i-1}\setminus\C^i$. The set of minimal elements of $\{x_1,x_2,\ldots\}$ is an antichain and therefore finite, so there is an integer $m$ such that $\{x_1,x_2,\ldots,x_m\}$ contains these minimal elements. In particular, $x_{m+1}\ge x_i$ for some $1\le i\le m$. However, we chose $x_{m+1}\in\C^m\setminus\C^{m+1}$, and because $x_{m+1}\ge x_i$, $x_{m+1}$ does not lie in $\C^i$ and thus cannot lie in $\C^m$, a contradiction.
\end{proof}

Because of Proposition~\ref{prop-wqo-subclasses-dcc}, we can consider a minimal (with respect to set containment) counterexamples to prove Theorems~\ref{thm-part-dim} and \ref{thm-comp-dim}. Our next result shows that such minimal counterexamples cannot be unions of two proper subdownsets, but before proving it we need to make some more general remarks about dimension, and in particular, our approach to establishing that downsets are finite dimensional.

A \emph{realizer} of the poset $P$ is a collection $\mathcal{R}$ of linear extensions of the poset such that $x \le_P y$ if and only if $x \le_L y$ for each $L \in \mathcal{R}$. Given that the elements of a realizer are extensions of the original poset, this is equivalent to saying that for each pair $x,y \in P$ of incomparable elements, there is some $L \in \mathcal{R}$ such that $y \le_L x$.

A \emph{refinement} of the poset $P$ is another partial order, say $\le_R$, such that $x\le_R y$ for all pairs $x,y\in P$ with $x\le_P y$. Because every refinement can be extended to a linear extension, to establish that the dimension of the poset $P$ is at most $n$, it suffices to find a collection $\mathcal{R}$ of $n$ refinements of $P$ such that $x\le_P y$ if and only if $x\le_R y$ for each $R\in\mathcal{R}$. Frequently we go a step further than this. As every refinement of a subposet of $P$ can be extended to a linear extension of $P$, to show that $P$ has dimension at most $n$ it suffices to find a collection $\mathcal{R}$ of $n$ \emph{partial refinements} (meaning refinements of subposets of $P$) with this property.

In constructing and analyzing these refinements or partial refinements, we use two additional terms. If the refinements $R_1$ and $R_2$ satisfy $x<_{R_1} y$ and $y<_{R_2} x$ (or vice versa) then we say that the pair $R_1$, $R_2$ \emph{breaks} the incomparison between $x$ and $y$. Finally, every homomorphism between a poset (or subposet of it) to a totally ordered set (typically $\mathbb{N}$ here) induces a refinement or partial refinement on the poset. In this situation we often say that the induced refinement \emph{sorts} the objects of $P$ according to the homomorphism. For example, a natural refinement of the either the poset of partitions or of compositions is the one that sorts them according to length (number of parts).

\begin{proposition}
\label{prop-union-dimension}
  Let $(P,\le)$ be a poset, and let $\C, \D \subseteq P$ be downsets of dimension $m$ and $n$ respectively. Then $\C \cup \D$ is a downset of dimension at most $m+n$.
\end{proposition}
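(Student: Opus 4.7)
The plan is to combine a realizer of $\mathcal{C}$ with a realizer of $\mathcal{D}$ into a realizer of $\mathcal{C}\cup\mathcal{D}$ by stacking the two subposets on top of each other in opposite orientations.

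First I would note that $\mathcal{C}\cup\mathcal{D}$ is itself a downset, and record the key observation driving the construction: since $\mathcal{D}$ is a downset, any $x\in\mathcal{C}\setminus\mathcal{D}$ satisfies $x\not\le y$ for every $y\in\mathcal{D}$ (otherwise $x$ would be forced into $\mathcal{D}$), and symmetrically every $y\in\mathcal{D}\setminus\mathcal{C}$ satisfies $y\not\le x$ for every $x\in\mathcal{C}$. In particular, all the elements of $\mathcal{D}\setminus\mathcal{C}$ may be placed above every element of $\mathcal{C}$ without violating the partial order on $\mathcal{C}\cup\mathcal{D}$, and dually all of $\mathcal{C}\setminus\mathcal{D}$ may be placed above $\mathcal{D}$.

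Next I would fix realizers $L_1,\dots,L_m$ of $\mathcal{C}$ and $M_1,\dots,M_n$ of $\mathcal{D}$. Each $L_i$ extends to a linear order $L_i'$ on $\mathcal{C}\cup\mathcal{D}$ by keeping the order $L_i$ on $\mathcal{C}$ and placing all of $\mathcal{D}\setminus\mathcal{C}$ above, ordered among themselves by any fixed linear extension of the induced subposet. Symmetrically, each $M_j$ extends to $M_j'$ by placing $\mathcal{C}\setminus\mathcal{D}$ above $\mathcal{D}$. The observation in the previous paragraph guarantees that these stackings are consistent with the partial order, so the $L_i'$ and $M_j'$ are genuine linear extensions of $\mathcal{C}\cup\mathcal{D}$.

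Finally I would check that the collection $\{L_1',\dots,L_m',M_1',\dots,M_n'\}$ of $m+n$ extensions is a realizer by breaking every incomparable pair. Pairs within $\mathcal{C}$ are broken by two of the $L_i'$, whose restrictions to $\mathcal{C}$ are the $L_i$; pairs within $\mathcal{D}$ are broken by two of the $M_j'$; and any pair $(x,y)$ with $x\in\mathcal{C}\setminus\mathcal{D}$ and $y\in\mathcal{D}\setminus\mathcal{C}$ is broken by $(L_i',M_j')$ for any $i,j$, since in $L_i'$ the element $y$ lies above $x$ while in $M_j'$ the element $x$ lies above $y$. There is no real obstacle; the only point that requires any care is recognising that the downset hypothesis on both sides forces $\mathcal{C}\setminus\mathcal{D}$ and $\mathcal{D}\setminus\mathcal{C}$ to be mutually incomparable, which is precisely what lets the two stackings in opposite directions be simultaneously compatible with the partial order.
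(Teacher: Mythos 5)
Your proposal is correct and follows essentially the same route as the paper: both rely on the observation that the downset hypotheses force $\C\setminus\D$ and $\D\setminus\C$ to be mutually incomparable, and both then form the $m+n$ extensions by taking ordinal sums, placing $\D\setminus\C$ above each linear extension of $\C$ and $\C\setminus\D$ above each linear extension of $\D$. The verification that this collection breaks all incomparable pairs is the same as in the paper.
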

\begin{proof}
  Certainly $\C \cup \D$ is a downset, so it suffices to show it has dimension at most $m+n$. Let $\{R_1, R_2, \dotsc, R_m\}$ and $\{S_1, S_2, \dotsc, S_n\}$ be realizers of $\C$ and $\D$ respectively. First, note that every member of $\C\setminus \D$ is incomparable with every member of $\D\setminus \C$. Define the refinements
\[
    R_1' = R_1 \oplus (\D\setminus \C),\,\dots,\,
    R_m' = R_m \oplus (\D\setminus \C)
\]
and
\[	
    S_1' = S_1 \oplus (\C\setminus \D),\,\dots,\,
    S_n' = S_n \oplus (\C\setminus \D),
\]
  where $A \oplus B$ is the \emph{ordinal sum} of $A$ and $B$, including all relations within both $A$ and $B$, as well as all relations of the form $a<b$ where $a \in A$ and $b \in B$.
  
  The collection $\{ R_1', \dotsc, R_m', S_1', \dotsc, S_n' \}$ realizes $\C \cup \D$, as it breaks all incomparisons between elements of $\C\setminus \D$ and $\D\setminus \C$ and realizes each of $\C$ and $\D$. This shows that $\C \cup \D$ has dimension at most $m+n$.
\end{proof}

We note that the hypothesis that $\C$ and $\D$ are both downsets in Proposition~\ref{prop-union-dimension} is essential, as shown by the fact that the crown of dimension $n$ can be expressed as the union of two antichains (which are thus each $2$-dimensional).

The downsets of compositions which are not unions of proper subdownsets are precisely the ages, as shown by the following theorem of Fra{\"{\i}}ss\'{e} (which we have specialized to our contexts here). This result implies it suffices to prove Theorems~\ref{thm-part-dim} and \ref{thm-comp-dim} for ages.

\begin{theorem}[Fra{\"{\i}}ss\'{e}~\cite{fraisse:sur-lextension-:}]
\label{thm-atomic-tfae}
The following are equivalent for a downset $\C$ of integer partitions or compositions:
\begin{enumerate}
	\item[(1)] $\C$ cannot be expressed as the union of two proper subdownsets,
	\item[(2)] $\C$ satisfies the \emph{joint embedding property} meaning that for every $a,b\in\C$ there is some $c\in\C$ such that $a,b\le c$, and
	\item[(3)] $\C=\Age(u)$ for some word $u\in\left(\mathbb{P}\cup\{n^\omega\st n\in\mathbb{P}\}\cup\{\omega,\omega^\omega\}\right)^\ast$.
\end{enumerate}
\end{theorem}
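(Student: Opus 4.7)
My plan is to prove the three equivalences by verifying $(3)\Rightarrow(2)$, $(2)\Leftrightarrow(1)$, and finally the substantive implication $(2)\Rightarrow(3)$, this last being the main obstacle and where I expect the technical work to concentrate. The first three directions are short order-theoretic or finite-support arguments; the hard direction is a Fra{\"{\i}}ss\'{e}-style chain construction followed by a compression of the limit into a finite word over the extended alphabet.

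For $(3)\Rightarrow(2)$, given $\C=\Age(u)$ and $a,b\in\C$, any embeddings of $a$ and $b$ into $u$ use only finitely many positions of $u$; restricting to these positions, replacing any $\omega$-height by a sufficiently large finite value and any $n^\omega$ or $\omega^\omega$ block by finitely many of its copies, yields a finite word $c\le u$ with $a,b\le c$, so $c\in\C$ is a joint embedding. For $(2)\Leftrightarrow(1)$, if $\C=\D_1\cup\D_2$ with both subdownsets proper, fix $a\in\D_1\setminus\D_2$ and $b\in\D_2\setminus\D_1$; any $c\in\C$ above both would lie in some $\D_i$ and, by the downset property, drag the other of $a,b$ into $\D_i$, so JEP must fail. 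Conversely, if $a,b\in\C$ admit no common upper bound in $\C$, the sets $\{x\in\C\st a\not\le x\}$ and $\{x\in\C\st b\not\le x\}$ are proper subdownsets whose union is $\C$.

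The substantive direction is $(2)\Rightarrow(3)$. Higman's Lemma makes $\C$ well quasi-ordered and hence countable, so I enumerate $\C=\{c_1,c_2,\ldots\}$ and inductively apply JEP to build a chain $d_1\le d_2\le\cdots$ in $\C$ with $c_i\le d_i$, ensuring every element of $\C$ lies below some $d_i$. Fixing coherent embeddings $d_i\hookrightarrow d_{i+1}$ (refining the chain by a diagonal argument if necessary) and passing to the direct limit yields a generalized skyline $\bar d$ whose age is exactly $\C$, since any finite embedded composition uses finitely many positions that stabilize in some $d_i$. The main obstacle is to re-encode $\bar d$ as a finite word $u$ over the alphabet $\mathbb{P}\cup\{n^\omega\st n\in\mathbb{P}\}\cup\{\omega,\omega^\omega\}$ with $\Age(u)=\Age(\bar d)$: the key observation is that a maximal contiguous stretch of positions in $\bar d$ with identical height contributes to embeddings only through its cardinality (one versus $\omega$-many) and so collapses to a single alphabet symbol, and the well quasi-ordering of $\C$ — via the descending chain condition of Proposition~\ref{prop-wqo-subclasses-dcc} — prevents infinitely many distinct block types from persisting in the limit, leaving only a finite word. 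The partition case runs identically, with partition words taken to be non-increasing sequences over the same alphabet.
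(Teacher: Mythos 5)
The paper does not actually prove this statement---it cites Fra{\"{\i}}ss\'{e} and uses it as a black box---so your proposal has to stand on its own. Your $(3)\Rightarrow(2)$ and $(1)\Leftrightarrow(2)$ arguments are correct, and the chain construction in $(2)\Rightarrow(3)$ (take a cofinal chain $d_1\le d_2\le\cdots$ via JEP, fix embeddings, pass to the limit $\bar d$, check that $\Age(\bar d)=\C$ because any finite embedding stabilizes in some $d_i$) is the standard Fra{\"{\i}}ss\'{e} argument and is sound; the ``diagonal argument'' remark is unnecessary, and ``wqo hence countable'' is a non sequitur (countability is trivial since compositions form a countable set), but these are cosmetic.

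The genuine gap is the final compression of $\bar d$ into a finite word over $\mathbb{P}\cup\{n^\omega\st n\in\mathbb{P}\}\cup\{\omega,\omega^\omega\}$. Your stated mechanism---maximal constant-height stretches collapse to single symbols, and wqo/dcc ``prevents infinitely many distinct block types from persisting in the limit''---does not work: the limit can perfectly well have infinitely many maximal constant-height blocks, and even a dense or otherwise wild index order. For instance, the chain $d_i=(21)^i$ produces $\bar d = 212121\cdots$, with infinitely many blocks, and $d_i = 1\,2\,1\,3\,1\,4\cdots 1\,i$ produces a limit with unbounded heights interleaved with $1$'s. The theorem survives in these cases only because the \emph{age} of the limit happens to equal $\Age(2^\omega)$ and $\Age(\omega^\omega)$ respectively, i.e.\ the age of a \emph{different} structure that is a finite extended word---and that identification is exactly the content you still need to prove. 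Proposition~\ref{prop-wqo-subclasses-dcc} cannot supply it as invoked: the descending chain condition concerns chains of downsets and says nothing about the block structure of $\bar d$. To close the gap you must show that the age of an arbitrary countable ``generalized skyline'' (heights in $\mathbb{P}\cup\{\infty\}$ on a countable linear order) equals $\Age(u)$ for some finite extended word $u$, e.g.\ by analyzing the finitely many places where heights are unbounded or where infinitely many columns accumulate and showing the age depends only on that finite data (the bounded-length case, where embeddings between equal-length compositions are position-preserving and one can take limiting heights coordinatewise, is a good warm-up), or by a Noetherian induction on downsets that genuinely uses the dcc. A small additional slip: a finite constant-height block of length $k>1$ is not ``one symbol'' but $k$ letters; this is harmless, but the cardinality dichotomy is ``finite versus infinite,'' not ``one versus $\omega$-many.''
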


We conclude this section by providing the only specific dimension results of the paper. To realize the downset $\Age(\omega\omega)$ of compositions, we use a pair of linear extensions $L_1$ and $L_2$ and a refinement $R_3$. The first, $L_1$, orders compositions according to the \emph{shortlex order}, which sorts compositions first by their length, and within each length sorts compositions according to the lexicographical ordering. The second, $L_2$, orders compositions according to the \emph{shortcolex order}, which sorts compositions first by their length, and within each length sorts compositions according to the colexicographical ordering (lexicographical order, but sorting from right to left). Lastly, the refinement $R_3$ sorts compositions first by their largest part and then by their second largest part. Note that this sometimes leaves a composition and its reverse incomparable, and thus is not a linear extension.

These three refinements constitute a realizer of $\Age(\omega\omega)$, implying that the dimension of $\Age(\omega\omega)$ is at most $3$. Observing that this age contains the crown of dimension $3$ below allows us to conclude that the dimension of $\Age(\omega\omega)$ equals $3$.

  \begin{center}
  \begin{footnotesize}
  \begin{tikzpicture}[xscale=2]
    \hassestp{21,12,3}{13,31,22}
  \end{tikzpicture}
  \end{footnotesize}
  \end{center}

\begin{proposition}
  The dimension of $\Age(\omega\omega)$ is $3$.
\end{proposition}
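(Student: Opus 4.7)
The plan is to prove the statement by establishing the two matching bounds separately, with the lower bound coming from the displayed crown and the upper bound from the three refinements $L_1, L_2, R_3$ already introduced in the preceding discussion.

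For the lower bound, I would verify directly that the six compositions in the Hasse diagram form a crown of dimension $3$. That is, I would check that $a_i \not\le b_i$ and $b_i \not\le a_i$ (so the diagonal pairs are incomparable) while $a_i \le b_j$ for every $i \ne j$. For instance, $a_3 = 3$ and $b_3 = 22$ are incomparable because $3 \not\le 2$ and $22$ does not embed into a single column of height $3$, while $a_3 = 3 \le 13 = b_1$ because $3 \le 3$. The other eight relations are checked analogously. Since every crown of size $2n$ has dimension $n$ and dimension is monotone under taking subposets, this gives $\dim \Age(\omega\omega) \ge 3$.

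For the upper bound, I would verify that $L_1$, $L_2$, $R_3$ are all (partial) refinements of $\Age(\omega\omega)$ and that any two incomparable elements are broken by some pair among them. That the shortlex and shortcolex orders are linear extensions of the generalized subword order on $\Age(\omega\omega)$ is immediate: if $u \le v$ with $|u| < |v|$ then both rules place $u$ below $v$ by length, while if $|u| = |v|$ then $u \le v$ forces componentwise $\le$, which is consistent with both lex and colex. That $R_3$ is a refinement follows from the fact that $u \le v$ implies $\max(u) \le \max(v)$, and in case of equality also $\min(u) \le \min(v)$ (treating the second-largest part of a single-part composition as $0$).

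To see that the three refinements together form a realizer, I would enumerate the two kinds of incomparable pairs in $\Age(\omega\omega)$. Type (a): a single part $n$ and a two-part composition $ab$ with $n > \max(a,b)$; here $L_1$ puts $n$ below $ab$ while $R_3$ puts $ab$ below $n$, so the pair $L_1, R_3$ breaks the incomparison. Type (b): two-part compositions $ab$ and $cd$ that are incomparable, which forces (after possibly swapping names) $a < c$ and $b > d$; then $L_1$ places $ab$ below $cd$ and $L_2$ places $cd$ below $ab$, so the pair $L_1, L_2$ breaks the incomparison. Combined with the lower bound, this yields $\dim \Age(\omega\omega) = 3$.

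I do not expect a substantive obstacle, as the argument is a short case analysis; the only mild subtlety is confirming that $R_3$, as a refinement rather than a linear extension, still suffices—this is exactly the remark made in the paper that any collection of partial refinements realizing the incomparisons bounds the dimension from above.
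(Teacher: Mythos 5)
Your proof is correct and follows essentially the same route as the paper, which likewise takes the shortlex order $L_1$, the shortcolex order $L_2$, and the refinement $R_3$ by largest then second-largest part as a realizer, and the crown on $\{21,12,3\}$ and $\{13,31,22\}$ for the lower bound. You have merely written out the case analysis (single part versus two-part, and two incomparable two-part compositions) that the paper leaves implicit.
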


Similar methods can be applied to show that the dimension of $\Age(\omega 1^\omega)$ is $2$, and that the dimensions of $\Age(1^\omega \omega 1^\omega)$, $\Age(\omega\omega 1^\omega)$, and $\Age(\omega1^\omega\omega)$ are each $4$.

\section{Partitions}
\label{sec-partitions}

Having observed in the introduction that the poset of all integer partitions is infinite dimensional, Theorem~\ref{thm-part-dim} will follow once we show that all proper downsets of partitions are finite dimensional. By Theorem~\ref{thm-atomic-tfae}, every proper downset of partitions can be written as a finite union of ages of the form $\Age(u)$ for some word $u\in \mathbb{P}\cup\{n^\omega\st n\in\mathbb{P}\}\cup\{\omega,\omega^\omega\}$. Because the parts of partitions are ordered, each such age is contained in an age of the form $\Age(\omega^k\lambda\ell^\omega)$ for nonnegative integers $k$ and $\ell$ and a finite partition $\lambda$ whose parts are greater than $\ell$. The Ferrers diagram of the possibly infinite partition $\omega^k \lambda \ell^\omega$ is shown below.

\begin{center}
\begin{tikzpicture}[scale=0.5]
  \draw (0,-10) -- ( 0, 0) -- (10,0);
  \draw (3,-10) -- ( 3,-6);
  \draw (0, -3) -- (10,-3);
  \draw (0, -6) -- (3.5,-6) -- (3.5,-5) -- (4,-5) -- (4,-4.5) -- (4.5,-4.5) -- (4.5,-3.5) -- (6,-3.5) -- (6,-3);
  
  \draw[<->] (1.9,-0.25) -- (1.9,-2.75);
  \draw[<->] (0.25,-8.1) -- (2.75,-8.1);

  \node at (1.5,-1.5) {$k$};
  \node[anchor=east] at (10.4,-1.5) {$\cdots$};
  \node at (1.5,-4.5) {$\lambda$};
  \node at (1.5,-7.5) {$\ell$};
  \node at (1.5,-9.4) {$\vdots$};
  
\end{tikzpicture}
\end{center}

By Proposition~\ref{prop-union-dimension}, it suffices to show that each such age is finite dimensional. We see that $\Age(\omega^k\lambda\ell^\omega)$ is isomorphic (as a poset) to the product $\Age(\omega^k \lambda) \times \Age(\ell^\omega)$. The first of these ages is finite dimensional because it is isomorphic to a subposet of $\mathbb{N}^{k+|\lambda|}$ where $|\lambda|$ denotes the length (number of parts) of $\lambda$. The second of these ages is finite dimensional because it is isomorphic to $\Age(\omega^\ell)$, via conjugation, and that age is in turn isomorphic to a subposet of $\mathbb{N}^{\ell}$. Thus the dimension of $\Age(\omega^k \lambda \ell^\omega)$ is at most $k+\ell+|\lambda|$. This completes the proof of Theorem~\ref{thm-part-dim}.

\section{Compositions}
\label{sec-proof}

We have shown in Section~\ref{sec-intro} that $\Age(\omega\omega\omega)$, $\Age(1^\omega 2 1^\omega 2 1^\omega)$, $\Age(\omega 1^\omega \omega 1^\omega)$, and $\Age(1^\omega\omega 1^\omega\omega)$ are infinite dimensional, and in Section~\ref{sec-tools} we showed that it suffices to show that the maximal ages not containing the four distinguished infinite dimensional ages are finite dimensional. The two types of these maximal ages are those of the forms $\Age(a\omega b 1^\omega c 1^\omega d \omega e)$ and $\Age(a 1^\omega b \omega c \omega d 1^\omega e)$ for finite compositions $a$, $b$, $c$, $d$, and $e$.

We establish the finite dimensionality of these two types of ages with a series of results. Our first such result implies that we may assume $a$ and $e$ are empty.

\begin{proposition}
\label{prop-ku}
  If $\Age(u)$ is finite dimensional for $u\in\left(\mathbb{P}\cup\{1^\omega,\omega\}\right)^\ast$, then $\Age(k u)$ is finite dimensional for all $k\in\mathbb{N}$.
\end{proposition}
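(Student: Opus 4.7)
The plan is to decompose $\Age(ku) = \Age(u) \cup T$, where $T = \Age(ku) \setminus \Age(u)$, and bound the dimension of each piece.

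First I would show that every $w \in T$ has the form $cx$ with $c \in \{1,\dots,k\}$ and $x \in \Age(u)$: any embedding of $w$ into $ku$ that avoids position~1 witnesses $w \in \Age(u)$, so for $w \in T$ every embedding uses position~1, forcing $w(1) \le k$ and $w(2\cdots) \in \Age(u)$. The key lemma is then that the map $T \to \{1,\dots,k\} \times \Age(u)$ sending $cx$ to $(c,x)$ is a poset embedding, where the codomain carries the product order. The substantive direction: if $c_1 x_1 \le c_2 x_2$ in $T$, the embedding cannot map $c_1$ to a position in $x_2$, since that would give $c_1 x_1 \le x_2 \in \Age(u)$, contradicting $c_1 x_1 \in T$; so $c_1$ must map to position~1 of $c_2 x_2$, yielding $c_1 \le c_2$ and $x_1 \le x_2$. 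Since $\{1,\dots,k\}$ is a chain of dimension one, this gives $\dim T \le 1 + \dim\Age(u) = m+1$.

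To combine these pieces into a realizer of $\Age(ku)$, I would observe that $\Age(u)$ is a downset of $\Age(ku)$ and $T$ is an upset (if $w_1 \in T$ and $w_1 \le w_2$, then $w_2 \in \Age(u)$ would force $w_1 \in \Age(u)$, a contradiction), so all cross-comparisons run from $\Age(u)$ upward to $T$. Within-$\Age(u)$ and within-$T$ incomparisons are broken by lifting the respective realizers to $\Age(ku)$ and taking the ordinal sum with $\Age(u)$-elements placed first. The main obstacle I expect is breaking cross-incomparisons $(d,t)$ with $d \in \Age(u)$, $t \in T$, and $d \not\le t$: one needs refinements placing $t$ before $d$. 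For this, given any linear extension of $T$, I would interleave $\Age(u)$-elements so that each $d$ appears just before the first $t' \in T$ in that extension with $d \le t'$ (or at the end if no such $t'$ exists); the upset property of $T$ ensures the result is a valid linear extension of $\Age(ku)$, and it places $t$ before $d$ precisely when no $t'$ preceding $t$ satisfies $d \le t'$. Using a realizer of $T$ (of size at most $m+1$) together with the $m$ lifted extensions of $\Age(u)$, this yields a realizer of $\Age(ku)$ of finite size, completing the proof.
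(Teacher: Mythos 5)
The structural half of your argument is fine and matches the paper's own observations: every element of $T$ has the form $cx$ with $c\le k$ and $x\in\Age(u)$, the map $cx\mapsto(c,x)$ embeds $T$ into the product $\{1,\dots,k\}\times\Age(u)$ (the paper phrases this as $B$ being isomorphic to a subposet of $\mathbb{N}\times A$), and $\Age(u)$ is a downset while $T$ is an upset, so the only delicate incomparisons are the cross pairs $(d,t)$ with $d\in\Age(u)$, $t\in T$, $d\not\le t$, which must be reversed by some extension placing $t$ below $d$. The gap is in your last step. Your interleaving of a linear extension $M$ of $T$ places $t$ before $d$ exactly when every element of the up-set $U_d=\{t'\in T : d\le t'\}$ comes strictly after $t$ in $M$; so to break the pair $(d,t)$ you need a \emph{single} extension of $T$ in which $t$ lies below \emph{all} of $U_d$ simultaneously. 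A realizer of $T$ only guarantees pairwise reversals, and you give no reason why the realizer you start from should contain extensions with this much stronger simultaneity property. When the incomparable part of $U_d$ has more than one minimal element, it can fail.

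Concretely, take $u=\omega\omega$ and $k=1$. Then $\Age(u)$ consists of the compositions with at most two parts, and $T=\{1ab : a,b\ge1\}\cong\mathbb{P}^2$, which is realized by its lexicographic and colexicographic extensions. Take $d=3$ and $t=122$; these are incomparable, and $U_d=\{1ab : a\ge3 \text{ or } b\ge3\}$ has the two minimal elements $113$ and $131$, both incomparable to $122$. In the lexicographic extension the first element of $U_d$ is $113$, which precedes $122$, so your rule inserts $3$ before $122$; in the colexicographic extension the first element of $U_d$ is $131$, which again precedes $122$, so $3$ is again inserted before $122$. All of your ordinal-sum extensions also place $3$ before $122$, so no extension in your collection reverses this incomparable pair and the collection is not a realizer. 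Arranging extensions of $T$ (equivalently, of pieces of $\Age(ku)$) that are compatible with all the sets $U_d$ at once is precisely the content of the paper's proof: it inducts on $k$, splits $A=\Age(u)$ and $B$ by first letters, uses the equivalences $ja_1\le ka_2\iff a_1\le a_2$ for $j\le k$ and $\ell a_1\le ka_2\iff \ell a_1\le a_2$ for $\ell>k$, and lifts realizers of the subposets $A_j'\cup B_k'$ and $A_{>k}\cup B_k'$ of $\Age(u)$ rather than an arbitrary realizer of $B$. Your outline could only be salvaged by constructing a special realizer of $T$ with the required property, and that construction -- the real work of the proposition -- is missing.
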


\begin{proof}
  We proceed by induction on $k$. The base case of $k=0$ is tautological, so let $k \in \mathbb{P}$ be given, and assume $\Age((k-1)u)$ is finite dimensional. Let $A = \Age(u)$, let $B = \Age(ku) \setminus A$, and for each $1 \le j \le k$, define
  \[
    A_j = \{ j a \in A \}
    \quad\text{and}\quad
    B_j = \{ j a \in B \}.
  \]
  as well as $A_{>k} = \{\ell a \in A \st \ell > k\}$.
  
  By induction, $A \cup B_j$ is finite dimensional for each $1 \le j \le k-1$. Furthermore, $B$ is finite dimensional as it is isomorphic to a subposet of $\mathbb{N} \times A$. Therefore it suffices to show that $A_j \cup B_k$ and $A_{>k} \cup B_k$ are finite dimensional for each $1 \le j \le k$.
  
  Fix $1 \le j \le k$. Given $j a_1 \in A_j$ and $k a_2 \in B_k$, we have $j a_1 \le k a_2$ if and only if $a_1 \le a_2$. For this reason, we define
  \[
    A_j' = \{ a \st j a \in A_j \}
    \quad\text{and}\quad
    B_k' = \{ a \st k a \in B_k \},
  \]
  and consider a realizer $\{L_1, \dots, L_n\}$ of $A_j' \cup B_k'$, which is finite dimensional as it is contained in $A$. For each $1 \le i \le n$, we expand $L_i$ into a linear extension $\hat{L}_i$ of a set containing $A_j \cup B_k$. To do so, we replace the instance of each composition $v$ in $L_i$ with the two element chain $\{jv, kv\}$. If $j a_1 \in A_j$ and $k a_2 \in B_k$ with $j a_1 \not\le k a_2$, then $a_1 \not\le a_2$. Thus $a_2$ precedes $a_1$ in some $L_i$, meaning $k a_2$ precedes $j a_1$ in $\hat{L}_i$.
  
  Lastly, given $\ell a_1 \in A_{>k}$ and $k a_2 \in B_k$, we have $\ell a_1 \le k a_2$ if and only if $\ell a_1 \le a_2$. Let $\{R_1, \dots, R_m\}$ be a realizer of $A_{>k} \cup B_k'$, which is finite dimensional as it is contained in $A$. For each $1 \le i \le m$, we expand $R_i$ into a linear extension $\hat{R}_i$ of $A_{>k} \cup B_k$. To do so, we replace the instance of $a \in B_k'$ in $R_i$ with $k a$.
  
  Then, if $\ell a_1 \in A_{>k}$ and $k a_2 \in B_k$ with $\ell a_1 \not\le k a_2$, then $\ell a_1 \not\le a_2$. Thus $a_2$ precedes $\ell a_1$ in some $R_i$, meaning $k a_2$ precedes $\ell a_1$ in $\hat{R}_i$.
\end{proof}

By applying Proposition~\ref{prop-ku} twice, we obtain the following.

\begin{corollary}
  For all compositions $a$ and $b$, both $\Age(a 1^\omega b)$ and $\Age(a \omega b)$ are finite dimensional.
\end{corollary}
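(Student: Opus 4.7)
The plan is to iterate Proposition~\ref{prop-ku}, building each of $\Age(a \omega b)$ and $\Age(a 1^\omega b)$ from the inside out, starting from the base cases $\Age(\omega)$ and $\Age(1^\omega)$. Each of these base cases is isomorphic to $\mathbb{N}$ (the only compositions embedding into $\omega$ are single parts, and the only ones embedding into $1^\omega$ are strings of $1$'s), and hence has dimension~$1$.

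Proposition~\ref{prop-ku} only prepends a letter, so to append the letters of $b$ I will first derive a right-hand analogue. The map $w \mapsto w^{\mathrm{rev}}$ reversing a word is an automorphism of the generalized subword order on $\left(\mathbb{P}\cup\{1^\omega,\omega\}\right)^\ast$, so $\Age(u)$ is finite dimensional if and only if $\Age(u^{\mathrm{rev}})$ is. Applying Proposition~\ref{prop-ku} to $u^{\mathrm{rev}}$ and then reversing yields the dual statement: if $\Age(u)$ is finite dimensional, then so is $\Age(uk)$ for every $k\in\mathbb{N}$.

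Writing $a = a_1 \cdots a_p$ and $b = b_1 \cdots b_q$, an induction on $p$ using Proposition~\ref{prop-ku} first shows that $\Age(a \omega)$ (respectively $\Age(a 1^\omega)$) is finite dimensional; a further induction on $q$ using the right-hand analogue then establishes the same for $\Age(a \omega b)$ (respectively $\Age(a 1^\omega b)$). These two inductive steps are the two applications to which the sentence ``applying Proposition~\ref{prop-ku} twice'' refers. There is no substantive obstacle: the entire content of the corollary sits inside Proposition~\ref{prop-ku}, and the only mildly nontrivial point is the observation that reversal is a poset automorphism, which is immediate from the symmetric formulation of the generalized subword order.
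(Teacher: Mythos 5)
Your proof is correct and is essentially the paper's intended argument: the paper's one-line ``apply Proposition~\ref{prop-ku} twice'' is exactly your iterated prepending of the letters of $a$ together with the reversal-symmetric appending of the letters of $b$, starting from the one-dimensional chains $\Age(\omega)$ and $\Age(1^\omega)$. Your explicit observation that reversal is an order isomorphism (giving the right-hand analogue of Proposition~\ref{prop-ku}) simply fills in the detail the paper leaves implicit.
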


The proof of our next result is more complicated.

\begin{proposition}
\label{prop-1oa1o}
  For all compositions $c$, $\Age(1^\omega c 1^\omega)$ is finite dimensional.  
\end{proposition}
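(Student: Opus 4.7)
The plan is to prove the proposition by induction on the length of $c$. The base case $|c|=0$ gives $\Age(1^\omega c 1^\omega) = \Age(1^\omega)$, which is a chain.

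For the inductive step, write $c = c' k$ and set $P = \Age(1^\omega c 1^\omega)$. If $k=1$, then $P = \Age(1^\omega c' 1^\omega)$, which is finite-dimensional by induction. So assume $k \ge 2$, and let $A = \Age(1^\omega c' 1^\omega) \subseteq P$. This is a downset of $P$, finite-dimensional by the inductive hypothesis. Its complement $B = P \setminus A$ consists of those $w \in P$ whose every embedding into $1^\omega c 1^\omega$ must use the trailing $k$. An analysis of such embeddings shows that each $w \in B$ has a last big entry $b \in \{2,\dots,k\}$, so $w$ may be written as $w = y\,b\, 1^{\alpha_k}$ with $y \in \Age(1^\omega c')$ and $\alpha_k \ge 0$; the age $\Age(1^\omega c')$ is finite-dimensional by the Corollary to Proposition~\ref{prop-ku}. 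Crucially, no element of $B$ lies below any element of $A$: a relation $w \le w'$ with $w \in B$ and $w' \in A$ would compose with $w' \le 1^\omega c' 1^\omega$ to yield $w \in A$, a contradiction.

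To produce a finite realizer of $P$, I would first take a realizer $\mathcal{R}_A$ of $A$ and extend each $L \in \mathcal{R}_A$ to a refinement $\widehat L$ of $P$ by splicing every $w \in B$ into $L$ immediately above the highest $A$-element dominated by $w$. This splicing is valid precisely because no $B$-element lies below any $A$-element, and the extended refinements break every within-$A$ incomparison together with every cross-incomparison in the direction ``$A$-element before $B$-element''. Further refinements are built from a realizer of $\Age(1^\omega c')$ acting on the $y$-coordinate together with monotone homomorphisms into $\mathbb{N}$, such as the length of $w$ and, for each $v \ge 2$ and $i \ge 1$, the quantity $f_{v,i}(w) = |w| + 1 - p$ where $p$ is the position of the $i$th entry of $w$ of value at least $v$. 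In each of these additional refinements, elements of $A$ are placed below elements of $B$, which is consistent with $\le_P$ because $B$-elements never lie below $A$-elements; these refinements break within-$B$ incomparisons and cross-incomparisons in the opposite direction.

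The main obstacle is that the parametrization $w \mapsto (y,b,\alpha_k)$ is \emph{not} an order-embedding of $B$ into the product $\Age(1^\omega c') \times \{2,\dots,k\} \times \mathbb{N}$: one can have $w \le w'$ via an embedding whose last big of $w$ maps to a non-final big of $w'$, so that coordinate-wise sorting by $(b,\alpha_k)$ need not respect $\le_P$. Accommodating this requires using monotone surrogates in place of the raw parametrization coordinates and, when needed, additional refinements indexed by the finitely many possible shifts of the canonical big-pattern embedding. Verifying that the resulting collection is indeed a realizer reduces to a case analysis on the elements of the finite set $\Age(c)$.
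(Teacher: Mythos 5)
There is a genuine gap, in two places. First, the splicing step does not do what you claim. Placing each $w\in B$ immediately above $m_w=\max_L\{a'\in A\st a'\le_P w\}$ in a linear extension $L$ of $A$ only yields $w<_{\widehat L}a$ for an incomparable pair $a\in A$, $w\in B$ when \emph{every} element of the downset $D_w=\{a'\in A\st a'\le_P w\}$ precedes $a$ in that particular $L$. An arbitrary realizer $\mathcal{R}_A$ of $A$ need not contain such an $L$: if $a,x_1,x_2$ form an antichain in $A$ realized by $x_1<a<x_2$ and $x_2<a<x_1$, and $w\ge x_1,x_2$ is incomparable to $a$, then $w$ lands above $a$ in both spliced extensions. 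Since all of your remaining refinements place $A$ entirely below $B$, no member of the family ever witnesses $w<a$, so the collection need not be a realizer of $\Age(1^\omega c 1^\omega)$. You would have to construct the realizer of $A$ with a special ``downset separation'' property, and proving that finitely many linear extensions with that property exist is essentially the same difficulty as the proposition itself, not something the inductive hypothesis hands you.

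Second, the ``main obstacle'' you flag is the actual core of the proof, and the sketch defers it rather than resolves it. The elements of $B$ (and of $A$) come in infinite two-parameter families (prepended and appended runs of $1$'s), so one must break infinitely many cross- and within-$B$ incomparisons with finitely many refinements, and a composition can sit below another via an embedding that does not send last big entry to last big entry; ``monotone surrogates'' and ``additional refinements indexed by the finitely many possible shifts'' names the problem but gives no mechanism that keeps the number of refinements finite while handling all of $\mathbb{N}^2$ worth of parameters. The paper's proof is built around exactly this mechanism: it partitions $\Age(1^\omega c 1^\omega)$ into intervals $[a,1^\omega a 1^\omega)\cong\mathbb{N}^2$ for $a\in\Age(c)$ with extreme parts at least $2$, uses \emph{compact} embeddings of $a$ into $b$ plus two refinements (largest $r$ with $1^r a\le w$, largest $s$ with $a1^s\le w$) to reduce each cross pair to a finite set $T_{k,\ell}$ of unresolved incomparisons in a bounded rectangle, and then combines the local refinements $R_{k,\ell}$ into finitely many global ones by a periodic tiling of $\mathbb{N}^2$ (equivalence mod $\alpha_q$ and mod $n-\beta_1+1$), checking that refinements in the same class never conflict. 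Your induction on the last letter of $c$ does not make any of that easier; without some analogue of the compact-embedding bound and the periodicity argument, the proposal does not close.
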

\begin{proof}
  We partition the age of interest into a finite collection of intervals and then construct a family of linear extensions which break the incomparisons between these intervals. These intervals are $[a,1^\omega a 1^\omega) = \{d \in \Age(1^\omega a 1^\omega) \st d \ge a\}$ for each $a = a(1) \cdots a(m) \in \Age(c)$, where the first and last parts of $a$ are at least $2$. Each such interval is itself finite dimensional as it is isomorphic to $\mathbb{N}^2$. Let $\mathcal{R}$ denote the (finite) collection of linear extensions realizing each $[a,1^\omega a 1^\omega)$.
  
  It suffices to consider the union of a pair of such intervals. Let $a,b \le c$ where $a=a(1) \cdots a(m)$ and $b=b(1)\cdots b(n)$ have the property that the first and last parts of each $a$ and $b$ are at least $2$. Note that there are only finitely many such pairs $a,b$ because $c$ is a finite composition. First, if $a$ and $b$ are such that $a \not\le b$, then none of the elements of $[b,1^\omega b 1^\omega)$ embed into any of the elements of $[a,1^\omega a 1^\omega)$, and these incomparisons can be broken with the refinement $[a,1^\omega a 1^\omega)\oplus [b,1^\omega b 1^\omega)$. Let $\mathcal{S}$ be the (finite) collection of these refinements for each $a,b$ with $a \not\le b$.
  
  This leaves us to consider the case where $a$ and $b$ are comparable with  $a<b$, and the only incomparisons left to break are those of the form $1^i a 1^j \not\le 1^k b 1^\ell$.
  
  The bulk of the proof consists of contending with the fact that $a$ may have several embeddings into $b$. Of these, it suffices to consider the \emph{compact} embeddings, meaning those which cannot be shrunk. More precisely, let $\alpha_1<\cdots<\alpha_q$ denote the beginnings of these compact embeddings and $\beta_1<\cdots<\beta_q$ denote the ends. Because these are embeddings, for all $p$ we have
  \begin{align*}
  	a &\le b(\alpha_p) b(\alpha_p+1)\cdots b(\beta_p),\\
    \intertext{and because they are compact, we have both}
  	a &\not\le b(\alpha_p+1) b(\alpha_p+2)\cdots b(\beta_p),\\
  	a &\not\le b(\alpha_p) b(\alpha_p+1)\cdots b(\beta_p-1).
  \end{align*}
  Consider an incomparison between elements of these two intervals, $1^i a 1^j \not\le 1^k b 1^\ell$. This means that, in $\mathbb{N}^2$, we have incomparisons of the form
  \[
  	(i,j)\not\le (k+\alpha_p-1, \ell+n-\beta_p)
  \]
  for each $1\le p\le q$. The set of points $\{(k+\alpha_p-1, \ell+n-\beta_p): 1\le p\le q\}$ is an antichain in $\mathbb{N}^2$ that lies weakly above and to the right of $(k,\ell)$ in the plane, as shown on the left of Figure~\ref{fig-tile-plane}.
  
\NewDocumentCommand{\tilingshift}{ s m m }{

  \absdothollow{(#2,#3)}
  \IfBooleanT{#1}{\node[anchor=north] at (#2,#3) {$(k,\ell)$};}
  
  \draw [white, line cap = round, fill = lightgray] 
    (2+#2,6+#3) -- (2+#2,5+#3) -- (4+#2,5+#3) -- (4+#2,3+#3) -- (6+#2,3+#3) -- (6+#2,2+#3) -- (7+#2,2+#3) -- (7+#2,6+#3) -- (2+#2,6+#3);
  
  \draw [dotted, thick, line cap = round] 
    (2+#2,6+#3) -- (2+#2,5+#3) -- (4+#2,5+#3) -- (4+#2,3+#3) -- (6+#2,3+#3) -- (6+#2,2+#3) -- (7+#2,2+#3);
  \draw [thick, line cap = round] (2+#2,6+#3) -- (7+#2,6+#3) -- (7+#2,2+#3);

  \absdot{(2+#2,6+#3)}
  \absdot{(4+#2,5+#3)}
  \absdot{(6+#2,3+#3)} 
  \absdot{(7+#2,2+#3)}
  
  \IfBooleanT{#1}{\node [above left] at (6.6+#2,4+#3) {$T_{k,\ell}$};}
  
}

\begin{figure}
  \begin{footnotesize}
    \begin{center}
      \begin{tikzpicture}[scale=0.34]
        
        \draw[thick, <->] (0,11) -- (0,0) -- (12,0); % Axes
  
     	\absdothollow{(3,2)}
        \node[anchor=north] at (3,2) {$(k,\ell)$};
        
        \absdot{(5,8)}
        \absdot{(7,7)}
        \absdot{(9,5)} 
        \absdot{(10,4)}
        
        \node[anchor=south] at (5,8) {$(k+\alpha_1-1,\ell+n-\beta_1)$};
  %      \node[anchor=south west] at  (7,7) {$(k+\alpha_4-1,\ell+n-\beta_4)$};
  %      \node[anchor=north east] at (9,5) {$(k+\alpha_4-1,\ell+n-\beta_4)$};
        \node[anchor=north] at (10,4) {$(k+\alpha_4-1,\ell+n-\beta_4)$};
  
      \end{tikzpicture}
    \quad\quad
      \begin{tikzpicture}[scale=0.34]
        \draw[thick, <->] (0,11) -- (0,0) -- (12,0);
        \tilingshift*{3}{2}
      \end{tikzpicture}
    \quad\quad
      \begin{tikzpicture}[scale=0.17]
        \draw[thick, <->] (0,22) -- (0,0) -- (24,0);
        
        \draw[dotted] ( 2,0) -- ( 2,22);
        \draw[dotted] (11,0) -- (11,22);
        \draw[dotted] (20,0) -- (20,22);
        
        \draw[dotted] (0, 1) -- (24, 1);
        \draw[dotted] (0, 9) -- (24, 9);
        \draw[dotted] (0,17) -- (24,17);
        
        \tilingshift{3}{2}
        \tilingshift{3}{10}
        \tilingshift{12}{2}
        \tilingshift{12}{10}
      \end{tikzpicture}  
    \end{center}
  \end{footnotesize}
  
  \caption{(Left) A point $(k,\ell)$ representing $1^k b 1^\ell$ together with associated points representing the minimal compositions of the form $1^i a 1^j$ which do not embed into $1^k b 1^\ell$. (Center) A point $(k,\ell)$ representing $1^k b 1^\ell$ and its associated set $T_{k,\ell}$ representing compositions of the form $1^i a 1^j$. (Right) The shaded regions indicate part of a family of compositions included in one refinement constructed at the end of the proof of Proposition~\ref{prop-1oa1o}.}
  \label{fig-tile-plane}
\end{figure}
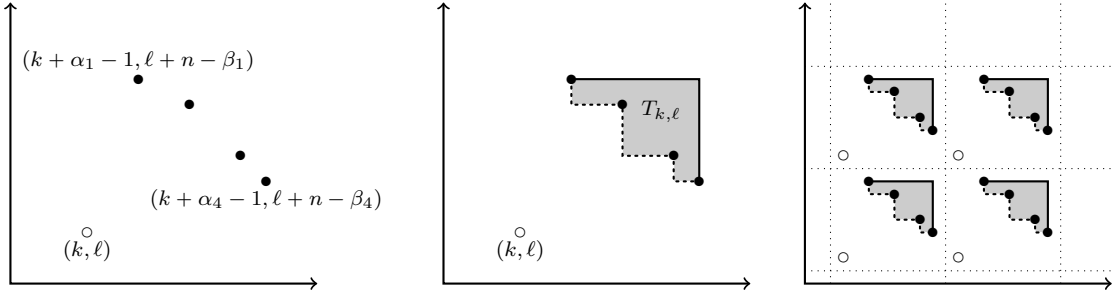
  
  We now introduce two refinements of $[a,1^\omega a 1^\omega)\cup [b,1^\omega b 1^\omega)$. The first sorts compositions by the largest $r$ such that $1^ra$ is contained in them, while the second sorts compositions by the largest $s$ such that $a1^s$ is contained in them. For a given $k$ and $\ell$, these two refinements break all incomparisons of the form $1^i a 1^j\not\le 1^k b 1^\ell$ where $i > k+\alpha_q-1$ or $j>\ell+n-\beta_1$. Still thinking of $k$ and $\ell$ as fixed, this leaves us with a finite set of incomparisons of the form $1^i a 1^j \not\le 1^k b 1^\ell$ to break, as illustrated in the center of Figure~\ref{fig-tile-plane}. Let $T_{k,\ell}$ denote the finite set of compositions of the form $1^i a 1^j$ whose incomparisons with $1^k b 1^\ell$ have not been dealt with. Thus $T_{k,\ell}$ is the set
  \[
  	\{ 1^i a 1^j \st \text{$(i,j) \le (k + \alpha_q - 1, \ell + n - \beta_1)$ and $(i,j) \not\le (k+\alpha_p-1,\ell+n-\beta_p)$ for all $1 \le p \le q$}\}.
  \]
  We identify each composition $1^i a 1^j\in T_{k,\ell}$ with the point $(i,j)$ in the plane. Thus the points corresponding to the compositions in $T_{k,\ell}$ are contained in the rectangle
	\[
      [k, k+\alpha_q-1] \times [\ell, \ell+n-\beta_1].
	\]
  
  Given $k,\ell$, we define a refinement $R_{k,\ell}$ of $\{1^k b 1^\ell\} \cup T_{k,\ell}$ in which $1^k b 1^\ell$ is less than each element of $T_{k,\ell}$. All that remains is to combine the collection of refinements $R_{k,\ell}$ into finitely many refinements of $[a,1^\omega a 1^\omega)\cup [b,1^\omega b 1^\omega)$. We achieve this by partitioning $\mathbb{N}^2$ into equivalence classes with respect to the equivalence relation $(k,\ell) \sim (k',\ell')$ if $k \equiv k' \operatorname{mod} \alpha_q$ and $\ell \equiv \ell' \operatorname{mod} n-\beta_1+1$. We further write $[(k,\ell)]$ to denote the equivalence class containing $(k,\ell)$. Note that there are only finitely many such equivalences classes.
  
  The motivation for this equivalence relation is that if $(k,\ell)\sim (k',\ell')$ then the relations defined by $R_{k,\ell}$ and $R_{k',\ell'}$ do not conflict. Thus for any $(k,\ell) \in \mathbb{N}^2$, all of the relations
  \[
    \bigcup_{(k',\ell') \in [(k,\ell)]} R_{k',\ell'}
  \]
  can be combined into a single refinement. The compositions involved in one such refinement are drawn on the right of Figure~\ref{fig-tile-plane}.
  
  As there are only finitely many such equivalence classes in $\mathbb{N}^2$, and only finitely many pairs $a,b$ with $a \le b \le c$, this (finite) set of refinements, together with the refinements of $\mathcal{R}$ and $\mathcal{S}$, realizes $\Age(1^\omega c 1^\omega)$, completing the proof.
\end{proof}

With Proposition~\ref{prop-1oa1o} established, showing that ages of the forms $\Age(\omega a 1^\omega b 1^\omega c \omega)$ and $\Age(1^\omega a \omega b \omega c 1^\omega)$ are finite dimensional is accomplished by first proving that ages of the forms $\Age(\omega a 1^\omega b 1^\omega)$ and $\Age(1^\omega a \omega b 1^\omega)$ are finite dimensional. Each of these steps relies on Proposition~\ref{prop-1oa1o}.

\begin{proposition}
\label{prop-1oa1obo}
  For all compositions $a$ and $b$, $\Age(\omega a 1^\omega b 1^\omega)$ is finite dimensional.  
\end{proposition}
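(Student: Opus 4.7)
The plan has three steps. First, I reduce to showing that $\mathcal{A} := \Age(a 1^\omega b 1^\omega)$ is finite-dimensional, which follows by applying Proposition~\ref{prop-1oa1o} with $c = b$ and then iterating Proposition~\ref{prop-ku} to prepend the finite composition $a$. The remaining task is to pass from $\mathcal{A}$ to $\Age(\omega a 1^\omega b 1^\omega) = \mathcal{A} \sqcup \mathcal{B}$, where $\mathcal{B}$ denotes the set of ``new'' elements that use the initial $\omega$.

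The key structural observation is that every $w \in \mathcal{B}$ admits a unique canonical decomposition $w = k \hat{w}$ with $k \ge 1$ and $\hat{w} \in \mathcal{A}$: in any embedding of $w$ into $\omega a 1^\omega b 1^\omega$, the first part of $w$ must be matched to the initial $\omega$, for otherwise $w$ would embed into $a 1^\omega b 1^\omega$ and lie in $\mathcal{A}$. The map $w \mapsto (k, \hat{w})$ is an order embedding $\mathcal{B} \hookrightarrow \mathbb{P} \times \mathcal{A}$: for $w, w^* \in \mathcal{B}$ one has $w \le w^*$ in the age if and only if $k \le k^*$ and $\hat{w} \le \hat{w}^*$, because the alternative embedding routing $w$ entirely through the suffix $\hat{w}^*$ would force $w \in \mathcal{A}$, contradicting $w \in \mathcal{B}$. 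Consequently $\mathcal{B}$ is finite-dimensional.

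To realize $\mathcal{A} \cup \mathcal{B}$, I exploit an asymmetry: no element of $\mathcal{B}$ is ever below an element of $\mathcal{A}$, since $\mathcal{A}$ is a downset. Thus cross-incomparabilities only arise as $w \in \mathcal{A}, w^* = k^* \hat{w}^* \in \mathcal{B}$ with $w \not\le w^*$, which in turn forces $w \not\le \hat{w}^*$---and so some $L_i$ in a realizer of $\mathcal{A}$ places $\hat{w}^* <_{L_i} w$. Starting from such a realizer $\{L_1,\ldots,L_d\}$, I propose to build refinements $\hat{L}_1,\ldots,\hat{L}_d$ of the full age by inserting, for each $L_i$ and each $w^* = k^*\hat{w}^* \in \mathcal{B}$, the element $w^*$ near the position of $\hat{w}^*$ in $L_i$, together with one supplementary refinement that places $\mathcal{A}$ entirely below $\mathcal{B}$ (valid by the asymmetry) amalgamated with a realizer of $\mathcal{B}$.

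The main obstacle is determining the correct insertion position of each $w^*$ in $L_i$. Placing $w^*$ immediately after $\hat{w}^*$ is the first guess, and it handles all cross-incomparabilities, but it can conflict with cross-comparabilities of the form $w \le w^*$ that hold via $w(1) \le k^*$ and $w(2) \cdots w(n) \le \hat{w}^*$ without $w \le \hat{w}^*$: in such cases $L_i$ may have $\hat{w}^* <_{L_i} w$ while the age requires $w < w^*$. The fix is to insert $w^*$ after the latest element $v$ of $\{v \in \mathcal{A} \st v \le w^* \text{ in the age}\}$ as it appears in $L_i$, and within a common insertion position to order the $w^*$'s by increasing $k^*$ (consistent with the order embedding into $\mathbb{P} \times \mathcal{A}$). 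Verifying that this careful construction, together with a realizer of $\mathcal{B}$ and the uniform ``$\mathcal{A}$ below $\mathcal{B}$'' refinement, realizes $\Age(\omega a 1^\omega b 1^\omega)$ is the technical heart of the proof.
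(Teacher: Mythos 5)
There is a genuine gap, and it sits exactly where you deferred verification. Your decomposition $\Age(\omega a 1^\omega b 1^\omega)=\mathcal{A}\sqcup\mathcal{B}$, the order embedding $\mathcal{B}\hookrightarrow\mathbb{P}\times\mathcal{A}$, and the one-sided refinements putting $\mathcal{A}$ below $\mathcal{B}$ are all fine. The problem is the insertion rule. As you note, inserting $w^*=k^*\hat{w}^*$ immediately after $\hat{w}^*$ fails to produce linear extensions (e.g.\ with $a=2$, $b=3$: $23\in\mathcal{A}$, $w^*=43\in\mathcal{B}$, $\hat w^*=3$, and $23\le 43$ while $23\not\le 3$). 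But your proposed repair---inserting $w^*$ after the $L_i$-latest element of $D(w^*)=\{v\in\mathcal{A}\st v\le w^*\}$---sacrifices precisely the property you were relying on to break cross-incomparisons. For an incomparable pair $w\in\mathcal{A}$, $w^*\in\mathcal{B}$ you know only that \emph{some} $L_i$ has $\hat w^*<_{L_i}w$; but in that same $L_i$ the set $D(w^*)$ may contain elements incomparable to $w$ (in the example above, $D(43)$ contains $13$, $31$, $23$, all incomparable to $w=111$) which $L_i$ places above $w$, in which case $w^*$ gets inserted above $w$ and the incomparison $w\not\le w^*$ is never broken in any $\hat L_i$. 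What your construction actually needs is a single extension $L_i$ placing the \emph{entire} downset $D(w^*)$ below $w$, and that is strictly stronger than the realizer property of $\{L_1,\dots,L_d\}$; nothing in the proposal supplies it, so the ``technical heart'' does not go through as stated.

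The paper's proof fixes this by choosing the right proxy elements before inflating. Setting $\mbar=m+1$ where $m$ is the largest part of $a$ or $b$, it first realizes $\Age(\mbar a 1^\omega b 1^\omega)$ (finite dimensional by Propositions~\ref{prop-ku} and~\ref{prop-1oa1o}) and then replaces each $\mbar x$ by the chain $[\mbar x,\omega x)=\{kx\st k\ge\mbar\}$. Because $\mbar$ exceeds every part occurring elsewhere in the age, each $kx$ with $k\ge\mbar$ has exactly the same comparabilities with the non-inflated elements as its proxy $\mbar x$, so the chain insertion automatically yields linear extensions and breaks every incomparison involving a non-inflated element; the only leftover incomparisons are $k_1x_1\not\le k_2x_2$ with $x_1\le x_2$ and $k_1>k_2\ge\mbar$, which a single extra refinement sorting by largest part handles. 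In other words, the cure for the difficulty you identified is not a cleverer insertion position but enlarging the base age from $\Age(a1^\omega b1^\omega)$ to $\Age(\mbar a1^\omega b1^\omega)$ so that a genuine maximum proxy for each fiber exists inside the poset being realized; if you want to salvage your write-up, that is the modification to make.
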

\begin{proof}
  Let $m$ denote the maximum entry in $a$ or $b$ and let $\mbar = m + 1$. By Propositions~\ref{prop-ku} and \ref{prop-1oa1o} we have that $\Age(\mbar a 1^\omega b 1^\omega)$ is finite dimensional, so let $\{L_1, \dots L_n\}$ be a realizer of it. For each $1 \le i \le n$, we expand $L_i$ into a linear extension $\hat{L}_i$ of $\Age(\omega a 1^\omega b 1^\omega)$. To do so, we replace the instance of $\mbar x$ in $L_i$ with the linearly ordered interval $[\mbar x, \omega x)$.
  
  The only incomparisons yet to be handled are those of the form $k_1 x_1 \not\le k_2 x_2$ where $\mbar x_1 \le \mbar x_2$ and $k_1 > k_2 \ge \mbar$. These are fixed by including a single refinement which sorts elements of $\Age(\omega a 1^\omega b 1^\omega)$ by their largest entry.
\end{proof}

\begin{proposition}
\label{prop-oa1ob1oco}
  For all compositions $a$, $b$, and $c$, $\Age(\omega a 1^\omega b 1^\omega c \omega)$ is finite dimensional.
\end{proposition}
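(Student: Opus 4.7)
The plan is to mirror the proof of Proposition~\ref{prop-1oa1obo}, replacing the trailing $1^\omega$ by a trailing $\omega$. Let $m$ denote the maximum entry of $a$, $b$, or $c$, and set $\mbar = m + 1$.

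First, I would establish that $\Age(\omega a 1^\omega b 1^\omega c \mbar)$ is finite dimensional. Combining Propositions~\ref{prop-ku} and~\ref{prop-1oa1obo} with the observation that composition reversal is a poset isomorphism accomplishes this: starting from $\Age(\omega a 1^\omega b 1^\omega)$, which is finite dimensional by Proposition~\ref{prop-1oa1obo}, one reverses to $\Age(1^\omega b^r 1^\omega a^r \omega)$, applies Proposition~\ref{prop-ku} repeatedly to prepend the parts of $\mbar c^r$, and then reverses again.

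Second, given a realizer $\{L_1, \ldots, L_n\}$ of $\Age(\omega a 1^\omega b 1^\omega c \mbar)$, I would expand each $L_i$ into $\hat{L}_i$ by replacing each instance of $y\mbar$ with the linearly ordered chain $[y\mbar, y\omega) = \{yk \st k \ge \mbar\}$, in exact analogy with Proposition~\ref{prop-1oa1obo}. The incomparisons not already broken by the $\hat{L}_i$ are those of the form $y_1 k_1 \not\le y_2 k_2$ with $y_1\mbar \le y_2\mbar$ (forcing every $\hat{L}_i$ to place $y_1 k_1$ before $y_2 k_2$) and $k_1 > k_2 \ge \mbar$, and I would attempt to break these with the refinement that sorts compositions by their largest entry.

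The hard part will be verifying that the $\hat{L}_i$ really are linear extensions of the larger age and that sort-by-largest actually suffices. Because the initial $\omega$ allows elements in $\Age(\omega a 1^\omega b 1^\omega c \omega)$ to have arbitrarily large first parts as well as large last parts, the largest entry of an element need not be its last part, so sort-by-largest may fail to break the incomparison when the two elements' large parts are distributed differently between their first and last positions. Dealing with this case appears to require additional partial refinements; a natural candidate is the partial refinement defined on the subposet of multi-part compositions whose last part is at least $\mbar$ that sorts such compositions by their last part, which is a valid refinement there because in any embedding of one such multi-part composition into another, the presence of parts preceding the last part forces its last part to map to the last part of the target.
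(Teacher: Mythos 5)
Your plan is sound and, once the extra refinement you propose at the end is included, it does yield a complete proof; but it is a genuinely different argument from the paper's. After the $\hat{L}_i$ (which really are linear extensions: a part $\ge\mbar$ of an element of this age can only be its first or last part, and from this one checks that $u\le y_2k_2$ forces $u\le y_2\mbar$ whenever the last part of $u$ is at most $m$, and that $y_1k_1\le y_2k_2$ forces $y_1\mbar\le y_2\mbar$), the unbroken incomparisons $u\not\le w$ are of two kinds: (i) $u=y_1k_1$ and $w=y_2k_2$ with $k_1,k_2\ge\mbar$ and $y_1\le y_2$, in which case necessarily $k_1>k_2$ (otherwise $u\le w$), so your last-part refinement breaks it when $y_1\ne\varepsilon$, while if $y_1=\varepsilon$ then the single part $k_1$ must exceed every part of $w$ and largest-part sorting breaks it; and (ii) $u=k_1$ a single part $\ge\mbar$ and $w$ an element whose last part is $\le m$ but whose first part is $\ge\mbar$ --- a case missing from your stated list of leftovers, since such $w$ is not of the form $y_2k_2$ with $k_2\ge\mbar$ --- where again $u\not\le w$ forces $k_1$ to exceed every part of $w$, so largest-part sorting suffices. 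Thus the $\hat{L}_i$ together with the largest-part refinement and your last-part refinement on multi-part compositions with last part $\ge\mbar$ (valid because that last part must embed into the last part of the target) realize $\Age(\omega a 1^\omega b 1^\omega c \omega)$. The paper proceeds differently: it caps both ends, takes a realizer of $\Age(\mbar a 1^\omega b 1^\omega c \mbar)$, replaces each $\mbar v \mbar$ by the two-parameter interval $[\mbar v \mbar, \omega v \omega)$, and disposes of the elements with two large parts separately via the isomorphism of that set with a subposet of $\mathbb{N}\times\Age(a 1^\omega b 1^\omega c)\times\mathbb{N}$; this is the point of its six-set decomposition $A, B_1, B_2, C_1, C_2, D$. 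The paper's symmetric route needs no analysis of where the large parts sit and no extra sorting refinements at this stage, at the cost of the decomposition bookkeeping; your route keeps one $\omega$ uncapped and pays for the asymmetry with the two extra partial refinements, but it stays closer to the template of Proposition~\ref{prop-1oa1obo} and only needs the singly-capped age $\Age(\omega a 1^\omega b 1^\omega c \mbar)$, which you correctly obtain from Propositions~\ref{prop-ku} and~\ref{prop-1oa1obo} together with reversal (the same implicit step the paper uses when asserting the finite dimensionality of the complement of $D$).
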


\begin{proof}
  We proceed by defining six sets, each of which is finite dimensional and whose union is the age of interest, and then construct a family of refinements which break the incomparisons between the sets. Let $m$ denote the maximum entry in $a$, $b$, or $c$, let $\mbar = m+1$, let $\mbarbar = \mbar+1$, and define
  \[
    \begin{array}{rlcl}
      &A   &=& [\varepsilon, m a 1^\omega b 1^\omega c m),\\
      &B_1 &=& [\mbar, m a 1^\omega b 1^\omega c \mbar),\\
      &B_2 &=& [m, m a 1^\omega b 1^\omega c m),\\
      &C_1 &=& [\mbar \mbar, \omega a 1^\omega b 1^\omega c \mbar),\\     
      &C_2 &=& [\mbar \mbar, \mbar a 1^\omega b 1^\omega c \omega),\\
      &D   &=& [\mbarbar \mbarbar, \omega a 1^\omega b 1^\omega c \omega).
    \end{array}
  \]
  Now, the complement of $D$,
  \begin{align*}
    \Age(\omega a 1^\omega b 1^\omega c \omega) \setminus D
      &= A \cup B_1 \cup B_2 \cup C_1 \cup C_2 \\
      &= \Age(\omega a 1^\omega b 1^\omega c \mbar) \cup \Age(\mbar a 1^\omega b 1^\omega c \omega)
  \end{align*}
  is finite dimensional by Propositions~\ref{prop-union-dimension}, \ref{prop-ku}, and \ref{prop-1oa1obo}. Also, $C_1 \cup C_2 \cup D$ is finite dimensional as it is isomorphic to a subposet of $\mathbb{N} \times \Age(a 1^\omega b1^\omega c) \times \mathbb{N}$. Thus it suffices to show that the incomparisons between $A\cup B_1 \cup B_2$ and $D$ can be broken with finitely many refinements. 
    
  Let $\{L_1, \dots, L_n\}$ be a realizer for $\Age(\mbar a 1^\omega b 1^\omega c \mbar)$. For each $1 \le i \le n$, we expand $L_i$ into a refinement $\hat{L}_i$ of $\Age(\omega a 1^\omega b 1^\omega c \omega)$. To do so, for each $v$, we replace the instance of $\mbar v \mbar$ in $L_i$ with the interval $[\mbar v \mbar, \omega v \omega)$. If $u \in A \cup B_1 \cup B_2$ and $k_1 v k_2 \in D$ for integers $k_1, k_2 \ge \mbarbar$ and $u \not\le k_1 v k_2$, then $u \not\le \mbar v\mbar$, so $\mbar v \mbar$ is less than $u$ in some $L_i$, and thus $k_1 v k_2$ is less than $u$ in $\hat{L}_i$. This completes the proof.
\end{proof}

The proofs of our next two results are very similar to those of Propositions~\ref{prop-1oa1obo} and \ref{prop-oa1ob1oco}.

\begin{proposition}
\label{prop-1oaob1o}
  For all compositions $a$ and $b$, $\Age(1^\omega a \omega b 1^\omega)$ is finite dimensional.
\end{proposition}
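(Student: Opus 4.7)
The plan is to mimic the proof of Proposition~\ref{prop-1oa1obo}, now expanding an interior position rather than the leading one. First I would let $m$ denote the maximum entry appearing in $a$ or $b$, set $\mbar = m+1$, and apply Proposition~\ref{prop-1oa1o} with the finite composition $a\mbar b$ in place of $c$ to conclude that $\Age(1^\omega a \mbar b 1^\omega)$ is finite dimensional; let $\{L_1,\dots,L_n\}$ be a realizer of it.

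The key structural observation is that every composition in $\Age(1^\omega a \omega b 1^\omega)$ has at most one part exceeding $m$, since the entries of $a$ and $b$ are at most $m$ and the $1^\omega$ factors contribute only $1$'s, so any such part must be matched to the unique $\omega$-position. Consequently each element of $\Age(1^\omega a \omega b 1^\omega)$ either has all parts at most $m$ (in which case it already lies in $\Age(1^\omega a \mbar b 1^\omega)$) or admits a unique decomposition $xky$ with $k>m$, $x \in \Age(1^\omega a)$, and $y \in \Age(b 1^\omega)$.

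With this in hand, for each $i$ I would expand $L_i$ into a linear extension $\hat{L}_i$ of $\Age(1^\omega a \omega b 1^\omega)$ by replacing each instance of a composition of the form $x \mbar y$ in $L_i$ with the linearly ordered interval $[x\mbar y, x\omega y) = \{xky \st k \ge \mbar\}$, while compositions in $L_i$ with no part equal to $\mbar$ are left in place. A short case analysis, organized by whether each of two compositions has a part exceeding $m$, then verifies that $\hat{L}_i$ is a linear extension and that the collection $\{\hat{L}_1,\dots,\hat{L}_n\}$ breaks every incomparison except those of the form $x_1 k_1 y_1 \not\le x_2 k_2 y_2$ where $x_1 \mbar y_1 \le x_2 \mbar y_2$ in the original realizer but $k_1 > k_2 \ge \mbar$. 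Exactly as at the end of the proof of Proposition~\ref{prop-1oa1obo}, these remaining incomparisons are broken by adjoining a single refinement that sorts compositions in $\Age(1^\omega a \omega b 1^\omega)$ by their largest part.

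I expect the only genuinely new work compared to Proposition~\ref{prop-1oa1obo} to be isolating the uniqueness of the decomposition $u = xky$ and confirming that the expansion interacts correctly with the cross-case in which one composition has all parts at most $m$ while the other has a part exceeding $m$. Both are immediate consequences of the structural observation above, so the main obstacle is really bookkeeping rather than any fresh combinatorial idea.
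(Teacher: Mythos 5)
Your proposal is correct and follows essentially the same route as the paper's own proof: realize $\Age(1^\omega a \mbar b 1^\omega)$ via Proposition~\ref{prop-1oa1o}, expand each linear extension by replacing $x\mbar y$ with the chain $[x\mbar y, x\omega y)$, and break the remaining incomparisons (those with $x_1\mbar y_1 \le x_2\mbar y_2$ but $k_1 > k_2 \ge \mbar$) with one refinement sorting by largest part. The extra detail you supply --- uniqueness of the decomposition $xky$ because at most one part can exceed $m$ --- is exactly the observation the paper leaves implicit.
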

\begin{proof}
  Let $m$ denote the maximum entry in $a$ or $b$ and let $\mbar = m + 1$. By Proposition~\ref{prop-1oa1o} we have that $\Age(1^\omega a \mbar b 1^\omega)$ is finite dimensional, so let $\{L_1, \dots L_n\}$ be a realizer of it. For each $1 \le i \le n$, we expand $L_i$ into a linear extension $\hat{L}_i$ of $\Age(1^\omega a \omega b 1^\omega)$. To do so, we replace the instance of $x \mbar y$ in $L_i$ with the linearly ordered interval $[x \mbar y, x \omega y)$. 
  
  The only incomparisons yet to be handled are those of the form $x_1 k_1 y_1 \not\le x_2 k_2 y_2$ where $x_1 \mbar y_1 \le x_2 \mbar y_2$ and $k_1 > k_2 \ge \mbar$. These are fixed by including a single refinement which sorts elements of $\Age(1^\omega a \omega b 1^\omega)$ by their largest entry.
\end{proof}

\begin{proposition}
\label{prop-1oaoboc1o}
  For all compositions $a$, $b$, and $c$, $\Age(1^\omega a \omega b \omega c 1^\omega)$ is finite dimensional.  
\end{proposition}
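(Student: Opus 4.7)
The plan is to closely mirror the proof of Proposition~\ref{prop-oa1ob1oco}, swapping the roles of the outer and inner infinite positions---here the two $\omega$'s are the interior infinite parts and the $1^\omega$'s are on the outside. Let $m$ denote the maximum entry in $a$, $b$, or $c$, and set $\mbar = m+1$ and $\mbarbar = \mbar+1$. The strategy is to decompose $\Age(1^\omega a \omega b \omega c 1^\omega)$ into six subsets
\[
\begin{array}{rcl}
A   &=& [\varepsilon, 1^\omega a m b m c 1^\omega),\\
B_1 &=& [\mbar, 1^\omega a \mbar b m c 1^\omega),\\
B_2 &=& [\mbar, 1^\omega a m b \mbar c 1^\omega),\\
C_1 &=& [\mbar\mbar, 1^\omega a \omega b \mbar c 1^\omega),\\
C_2 &=& [\mbar\mbar, 1^\omega a \mbar b \omega c 1^\omega),\\
D   &=& [\mbarbar\mbarbar, 1^\omega a \omega b \omega c 1^\omega),
\end{array}
\]
classified by the sizes of the parts matched with the two interior $\omega$'s.

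First, the complement of $D$ within the age equals $\Age(1^\omega a \omega b \mbar c 1^\omega) \cup \Age(1^\omega a \mbar b \omega c 1^\omega)$, since any composition in the age having at most one part $\ge \mbarbar$ admits an embedding in which only one $\omega$ absorbs a part $\ge \mbarbar$ while the other $\omega$ can be downsized to $\mbar$. By Proposition~\ref{prop-1oaob1o}, applied with inner compositions $b\mbar c$ and $a\mbar b$ respectively, together with Proposition~\ref{prop-union-dimension}, the complement of $D$ is finite dimensional.

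Second, $C_1 \cup C_2 \cup D$ consists of those compositions in the age having at least two parts $\ge \mbar$; since the only positions in $1^\omega a \omega b \omega c 1^\omega$ that can accommodate such parts are the two $\omega$'s, each such composition decomposes uniquely as $w = w_1 k_1 w_2 k_2 w_3$ with $w_1 \in \Age(1^\omega a)$, $w_2 \in \Age(b)$, $w_3 \in \Age(c 1^\omega)$, and $k_1, k_2 \ge \mbar$. This identifies $C_1 \cup C_2 \cup D$ with a subposet of $\Age(1^\omega a) \times \mathbb{N} \times \Age(b) \times \mathbb{N} \times \Age(c 1^\omega)$, whose factors are each finite dimensional (the first and last by the corollary to Proposition~\ref{prop-ku}, and $\Age(b)$ because it is finite).

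It remains to break the incomparisons between $A \cup B_1 \cup B_2$ and $D$. Take a realizer $\{L_1, \ldots, L_n\}$ of $\Age(1^\omega a \mbar b \mbar c 1^\omega)$, which is finite dimensional by Proposition~\ref{prop-1oa1o} applied to the finite composition $a \mbar b \mbar c$. For each $i$, extend $L_i$ to a refinement $\hat{L}_i$ by replacing every composition $w_1 \mbar w_2 \mbar w_3$ appearing in $L_i$ with the interval $[w_1 \mbar w_2 \mbar w_3, w_1 \omega w_2 \omega w_3)$, ordered internally by the poset. For any $u \in A \cup B_1 \cup B_2$ and $w = w_1 k_1 w_2 k_2 w_3 \in D$ with $u \not\le w$, the inequality $w_1 \mbar w_2 \mbar w_3 \le w$ forces $u \not\le w_1 \mbar w_2 \mbar w_3$, so some $L_i$ places $w_1 \mbar w_2 \mbar w_3$ below $u$, whence $\hat{L}_i$ places $w$ below $u$. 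The main subtlety is verifying that the product decomposition in the second step is a genuine poset identification---specifically, that the two ``large'' parts of a composition are forced to be matched with the two interior $\omega$'s in any embedding, so that the decomposition $w = w_1 k_1 w_2 k_2 w_3$ is canonical and order-reflecting.
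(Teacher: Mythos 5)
Your proof is correct and follows essentially the same route as the paper's: the identical six-set decomposition via $\mbar$ and $\mbarbar$, the same two-age covering of the complement of $D$, the same product embedding for $C_1\cup C_2\cup D$, and the same expansion of a realizer of $\Age(1^\omega a \mbar b \mbar c 1^\omega)$ by replacing $x\mbar y\mbar z$ with the interval $[x\mbar y\mbar z, x\omega y\omega z)$. Your closing remark spelling out why the decomposition $w=w_1k_1w_2k_2w_3$ is canonical (large parts can only sit in the two interior $\omega$'s) is a point the paper leaves implicit, but it is the same argument.
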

\begin{proof}
Let $m$ denote the maximum entry in $a$, $b$, or $c$, let $\mbar = m+1$, let $\mbarbar = \mbar+1$, and define
  \[
    \begin{array}{rlcl}
      &A   &=& [\varepsilon, 1^\omega a m b m c 1^\omega),\\
      &B_1 &=& [\mbar, 1^\omega a m b \mbar c 1^\omega),\\
      &B_2 &=& [\mbar, 1^\omega a \mbar b m c 1^\omega),\\
      &C_1 &=& [\mbar \mbar, 1^\omega a \omega b \mbar c 1^\omega),\\
      &C_2 &=& [\mbar \mbar, 1^\omega a \mbar b \omega c 1^\omega),\\
      &D   &=& [\mbarbar \mbarbar, 1^\omega a \omega b \omega c 1^\omega).
    \end{array}
  \]
  The complement of $D$,
  \begin{align*}
    \Age(1^\omega a \omega b \omega c 1^\omega)
      &= A \cup B_1 \cup B_2 \cup C_1 \cup C_2 \\
      &= \Age(1^\omega a \mbar b \omega c 1^\omega) \cup \Age(1^\omega a \omega b \mbar c 1^\omega)
  \end{align*}
  is finite dimensional by Propositions~\ref{prop-union-dimension}, \ref{prop-ku}, and \ref{prop-1oaob1o}. Also, $C_1 \cup C_2 \cup D$ is finite dimensional as it is isomorphic to a subposet of $\Age(1^\omega a) \times \mathbb{N} \times \Age(b) \times \mathbb{N} \times \Age(c 1^\omega)$. Thus it suffices to show that the incomparisons between $A\cup B_1 \cup B_2$ and $D$ can be broken with finitely many refinements. 
    
  Let $\{L_1, \dots, L_n\}$ be a realizer for $\Age(1^\omega a \mbar b \mbar c 1^\omega)$. For each $1 \le i \le n$, we expand $L_i$ into a refinement $\hat{L}_i$ of $A \cup B_1 \cup B_2 \cup D$. To do so, for each $x,y,z$, we replace the instance of $x \mbar y \mbar z$ in $L_i$ with the interval $[x \mbar y \mbar z, x \omega y \omega z)$. 
  
  Then, if $u \in A \cup B_1 \cup B_2$ and $x k_1 y k_2 z \in D$ with $k_1, k_2 \ge \mbarbar$, and $u \not\le x k_1 y k_2 z$, then we have $u \not\le x \mbar y \mbar z$. Thus $x \mbar y \mbar z$ is less than $u$ in some $L_i$, and thus $x k_1 y k_2 z$ is less than $u$ in $\hat{L}_i$. This completes the proof.
\end{proof}

With Propositions~\ref{prop-oa1ob1oco} and \ref{prop-1oaoboc1o} established, we note that the proof of Theorem~\ref{thm-comp-dim} is complete, given the remarks at the beginning of Section~\ref{sec-proof}.

\section{Concluding Remarks}
\label{sec-conclusion}

Theorems~\ref{thm-part-dim} and \ref{thm-comp-dim} characterize the finite dimensional downsets in the posets of integer partitions and compositions, respectively. There are several similar contexts in which the analogous questions have yet to be considered. One such context is the poset of permutations under the permutation pattern order. We refer to the second author's survey~\cite{vatter:permutation-cla:} for more information on this order. A related example is the poset of set partitions, first studied by Klazar~\cite{klazar:counting-patter:2,klazar:counting-patter:1,klazar:on-abab-free-an:} and Sagan~\cite{sagan:pattern-avoidan:}. Another natural context would be the generalized subword order over an arbitrary poset $P$, a context where McNamara and Sagan~\cite{mcnamara:the-mobius-func:} have recently determined the M\"obius function. Indeed, even the special case of words over a two-element antichain appears to be untouched.

\bibliographystyle{acm}
\bibliography{refs}

\end{document}